\def\disp{\displaystyle}
\def\dref#1{(\ref{#1})}
\theoremstyle{plain}
\newtheorem{theorem}{Theorem}[section]
\newtheorem{lemma}{Lemma}[section]
\newtheorem{corollary}{Corollary}[section]
\theoremstyle{definition}
\newtheorem{remark}{Remark}[section]
\numberwithin{equation}{section}
\begin{document}

\title{\bf Boundedness and large time behavior  in a  higher-dimensional
Keller--Segel system with singular sensitivity and
logistic source}

\author{
Jiashan Zheng$^{a}$\thanks{Corresponding author.   E-mail address:
 zhengjiashan2008@163.com (J.Zheng)}, Gui Bao$^{a}$
 \\
    School of Mathematics and Statistics Science,\\
     Ludong University, Yantai 264025,  P.R.China \\
}
\date{}

\maketitle \vspace{0.3cm}
\noindent
\begin{abstract}
This paper focuses on the following  Keller-Segel system with  singular sensitivity and
logistic source
$$
 \left\{\begin{array}{ll}
  u_t=\Delta u-\chi\nabla\cdot(\frac{u}{v}\nabla v)+ au-\mu u^2,\quad
x\in \Omega, t>0,\\
 \disp{ v_t=\Delta v- v+u},\quad
x\in \Omega, t>0\\
 \end{array}\right.\eqno(\star)
$$
in a smoothly bounded domain
 $\Omega\subset\mathbb{R}^N(N\geq1)$, with zero-flux boundary conditions,
 where $a>0,\mu>0$ and $\chi>0$ are given constants. If $\chi$ is    small enough,  then, for all reasonable regular initial data,
a corresponding initial-boundary value problem for
$(\star)$  possesses a global
classical solution  $(u, v)$  which is
 {\bf bounded} in $\Omega\times(0,+\infty)$. Moreover, if $\mu$ is large enough, the solution $(u, v)$ exponentially
converges to the constant stationary solution $(\frac{a}{\mu },
\frac{a}{\mu })$ in the norm of $L^\infty(\Omega)$ as
$t\rightarrow\infty$.
\end{abstract}

\vspace{0.3cm}
\noindent {\bf\em Key words:}~Boundedness;
Chemotaxis;
Singular sensitivity;
Large time behavior;
Logistic source

\noindent {\bf\em 2010 Mathematics Subject Classification}:~  92C17, 35K55,
35K59, 35K20

\newpage
\section{Introduction}
Chemotaxis is the directed movement of cells as a response to gradients of the concentration of a chemical signal
substance (see Winkler et al.  \cite{Bellomo1216} and Keller and Segel \cite{Keller2710,Keller79}).
%
%
 Following experimental works of Adler (see  Adler et al. \cite{Adleromo1216,Adlessromo1216}), in 1971,
Keller and Segel (\cite{Keller79}) introduced a phenomenological model to
 capture this kind of behaviour,
the
general form of which reads
\begin{equation}
 \left\{\begin{array}{ll}
  u_t=\Delta u-\nabla\cdot(u\chi_0(v)\nabla v),\quad
x\in \Omega, t>0,\\
 \disp{\tau v_t=\Delta v +u- v},\quad
x\in \Omega, t>0,\\
 \end{array}\right.\label{1.ssddddessderrddffttrrfff1}
\end{equation}
where $\tau\in\{0,1\},$ the function $\chi_0$ measures the chemotactic sensitivity,
%
$u$ denotes the cell density and $v$ describes the concentration of the chemical signal which is
directly produced by cells themselves.  
In the last 40 years, a variety of chemotaxis models have been extensively studied with
various mechanisms from the cells diffusivity, the chemotactic sensitivity, and the cells growth-death (see \cite{Bellomo1216,Hillen79}).
We refer to the review papers \cite{Bellomo1216,Hillen79,Horstmann2710}
for detailed descriptions of the models and their developments. The main issue of the investigation in the mathematical
analysis of system \dref{1.ssddddessderrddffttrrfff1} is whether the solutions of the models are bounded or blow-up (see  e.g., \cite{Horstmann2710,Nanjundiahffger79312,Winkler793} for  $\chi_0:=\chi>0$ and \cite{Bilerss1216,Fujiessss3344,Lankeit2233791,Winkler44455678887,Stinnerrr44621,Nagaiddsssdddssddxxss}
for  $\chi_0:=\frac{\chi}{v}$).
%
For example, if $\chi_0(v) := \chi>0,$ it is known that for all reasonably regular initial
data 
the solutions of the corresponding Neumann initial boundary value problem for \dref{1.ssddddessderrddffttrrfff1} are global and remain
bounded when either $N= 1$, or $N = 2$ and
$\int_{\Omega}u_0 < 4\pi$, or $N\geq 3$ and the initial is sufficiently small (\cite{Osaki79,Winkler792,Horstmann791}).
%
%
However, according to Fujie and  Senba (\cite{Fujiessss3344}),
the sensitivity function $\chi_0(v)$ can not always be a constant, for example,
 in accordance with the Weber-Fechner's
law of stimulus perception in the process of chemotactic response, the sensitivity function $\chi_0(v)$ will be chosen by $\chi_0(v) = \frac{\chi}{v}$.
When $\chi_0(v) = \frac{\chi}{v}$,
it is known that all
{\bf radial classical} solutions of  system \dref{1.ssddddessderrddffttrrfff1} are global-in-time if either $N\geq 3$ with $\chi <\frac{2}{N-2}$, or $N= 2$ with $\chi > 0$ arbitrary
(see Nagai and   Senba \cite{Nagaiffhov1230}). If $N \geq 2$, Fujie \cite{Fujieffl79} proved that
there exist globally {\bf bounded}
classical solutions of system \dref{1.ssddddessderrddffttrrfff1} if  $\chi<\sqrt{\frac{2}{N}}$. The proof of boundedness of solutions for  system \dref{1.ssddddessderrddffttrrfff1} 
relying   on the second equation actually ensures  a positive pointwise {\bf lower bound} for $v$. The  {\bf lower bound} for $v$ can  be obtained  by
the {\bf lower bound} for $\int_{\Omega}u$, which is a clear result for  \dref{1.ssddddessderrddffttrrfff1}. One can see \cite{Fujieffl79}  for more  details.
For more  results with
 various {\bf sensitivity} functions, we refer to \cite{Fujiessddd1,Lankeit2233791,ffggLankeitsdddLankeit796}.

Apart from the aforementioned system,  in order  to describe
the spontaneous growth of cells,
a source of logistic type is included in \dref{1.ssddddessderrddffttrrfff1}  (see Winkler \cite{Winkler37103} and see also \cite{Tello710} and Zheng et al. \cite{Zhengjmaa22}).
%
Motivated by the above works,,
in this paper, we deal with the fully parabolic Keller--Segel
system with singular sensitivity and logistic
source
%
\begin{equation}
 \left\{\begin{array}{ll}
  u_t=\Delta u-\nabla\cdot(u\chi_0(v)\nabla v)+au-\mu u^2,\quad
x\in \Omega, t>0,\\
 \disp{ v_t=\Delta v +u- v},\quad
x\in \Omega, t>0,\\
 \disp{\frac{\partial u}{\partial \nu}=\frac{\partial v}{\partial \nu}=0},\quad
x\in \partial\Omega, t>0,\\
\disp{u(x,0)=u_0(x)},\quad  v(x,0)= v_0(x),~~
x\in \Omega\\
 \end{array}\right.\label{111.ssderrfff1}
\end{equation}
in a bounded domain $\Omega\subset R^N(N\geq1)$ with smooth boundary $\partial\Omega$. Our primary interest is in the case in which  
$a>0$ as well as $\mu > 0$
and
\begin{equation}
 \begin{array}{ll}
  \chi_0(v)=\frac{\chi}{v},~~\mbox{for}~~~
v>0\\
 \end{array}
\label{1.ssdelffllsgggsderrddfftjjjtrddffrfff1}
\end{equation}
with a constant $\chi > 0$.

For \dref{111.ssderrfff1} with {\bf $\chi_0(v):=\chi>0$}, a large quantities  of  literatures  are devoted to investigating boundedness and blow-up of the solutions (see e.g., Cie\'{s}lak et al.
 \cite{Cie72,Cie794,Cie791}, Burger et al. \cite{Burger2710},
  Calvez and  Carrillo \cite{Calvez710},  Keller and Segel \cite{Keller2710,Keller79},
   Horstmann et al. \cite{Horstmann2710,Horstmann79,Horstmann791},
 Osaki \cite{Osaki79},
    Painter and Hillen \cite{Painter79}, Perthame \cite{Perthame710},
Rascle and Ziti \cite{Rascle710}, Wang et al. \cite{Wang76}, Winkler \cite{Winkler555ani710,Winkler79,Winkler37103,Winkler715,Winkler793,Winkler79312vv}, Xiang \cite{Xiangggg} and
Zheng \cite{Zhengsssddsseedssddxxss}). In fact,
for any $\mu> 0,$ it is also shown that the logistic source can prevent blow up
whenever $N\leq 2$, or $\mu$ is sufficiently large (see Osaki and  Yagi \cite{Osaki79}, Osaki et al. \cite{Osakix391},  Winkler \cite{Winkler37103}, Zheng et al. \cite{Zhengjmaa22}).

The mathematical challenge to \dref{111.ssderrfff1} with {\bf $\chi_0(v)=\frac{\chi}{v}$} is that we must avoid the singular value $v = 0$. Therefore, in order to
show the global existence and {\bf boundedness} to problem \dref{111.ssderrfff1}, we should gain a positive pointwise {\bf lower bound} for $v$, which
is a well-known fact for problem \dref{1.ssddddessderrddffttrrfff1}, due to the variation-of-constants formula for $v$ and the fact that
$$
\int_{\Omega}{u (x,t)}=\int_{\Omega}{u_0(x)}>0~\mbox{for all}~~ t>0.
$$
 As for logistic sources contains   in \dref{111.ssderrfff1} with {\bf quadratic absorption}, however,
nothing seems to be known in this direction so far (see  Zhao and  Zheng \cite{Zhaoffgg} and Winkler et al. \cite{Fujiesssssss3344} for $N=2$).
Up to now, however, global existence results seem to be available only
for certain simplified variants such as e.g. the {\bf two-dimensional} analogue of \dref{111.ssderrfff1} (see  Zhao and  Zheng \cite{Zhaoffgg} and Winkler et al. \cite{Fujiesssssss3344}).
Therefore, very few results 
 appear to be available on system \dref{111.ssderrfff1} with such singular sensitivities and logistic source (see, e.g., Zhao and Zheng \cite{Zhaoffgg} and Winkler et al. \cite{Fujiesssssss3344}). In fact, in the spatially {\bf two-dimensional} case, the knowledge about  systems of type \dref{1.ssdelffllsgggsderrddfftjjjtrddffrfff1} is expectedly much
further developed.
The parabolic-elliptic system \dref{111.ssderrfff1} (the second equation of \dref{111.ssderrfff1} is replaced by $\Delta v =v-u$)
 was considered in \cite{Fujiesssssss3344}, where it was obtained that there
exists a unique globally bounded classical solution whenever
\begin{equation}\label{x1sssss.731426kkklll677gg}
a>\left\{
\begin{array}{ll}
\displaystyle{\frac{\chi^2}{4}~~\mbox{if}~~0<\chi\leq2},\\
\displaystyle{\chi-1~~\mbox{if}~~\chi>2.}\\
\end{array}
\right.
\end{equation}
Recently, if $a$ satisfies \dref{x1sssss.731426kkklll677gg},
 Zhao and Zheng (\cite{Zhaoffgg}) obtained the global bounded classical solution
 for the {\bf fully parabolic} system \dref{111.ssderrfff1} 
 in the
{\bf 2-dimensional} setting. When the (nonlinear)  logistic source is included, $\chi$
 satisfies suitable restrictions, Zhao and Zheng (\cite{Zhao23444ffgg}) further  studied the global
existence and boundedness of very weak solutions under the assumption that $\Omega$ is a  bounded {\bf convex} domain. However,
to the best
of our knowledge, it is yet unclear whether for $\Omega$ is a {\bf non-convex} domain and $N\geq3$, the solution of the problem is  bounded or not.
With some carefully analysis, the purpose of the present work is to investigate the convergence of all solution
components in \dref{1.ssdelffllsgggsderrddfftjjjtrddffrfff1} under some conditions, possibly
 involving the initial data or the interaction between chemotactic cross-diffusion and the limitation
 of cell growth. Without any restriction on the {\bf space dimension}, 
the first object of the present paper is to address the global {\bf boundedness} of solutions to \dref{1.ssdelffllsgggsderrddfftjjjtrddffrfff1}. Our main result in this respect is the following.
\begin{theorem}\label{theorem3}
Assume
that 
the initial data $(u_0,v_0)$ fulfills
\begin{equation}\label{x1.731426677gg}
\left\{
\begin{array}{ll}
\displaystyle{u_0\in C^0(\bar{\Omega})~~\mbox{with}~~u_0\geq0~~\mbox{in}~~\Omega~~\mbox{and}~~u_0\not\equiv0,~~x\in\bar{\Omega}},\\
\displaystyle{v_0\in W^{2,\infty}(\Omega)~~\mbox{with}~~v_0>0~~\mbox{in}~~\bar{\Omega},~~~\mbox{and}~~\frac{\partial v_0}{\partial \nu}=0,~~x\in\partial\Omega}.\\
\end{array}
\right.
\end{equation}
 Let $\Omega\subset\mathbb{R}^N(N\geq1)$ be a smooth bounded  domain.
  If $a$ and  $\chi$
satisfies \dref{x1sssss.731426kkklll677gg}
 and
\begin{equation}\label{x1sssss.7314kkkkkk26kkklll677gg}
\left\{
\begin{array}{ll}
\displaystyle{\chi>0~~\mbox{if}~~N=1},\\
\displaystyle{0<\chi<\sqrt{\frac{2}{N}}~~\mbox{if}~~N\geq2},\\
\end{array}
\right.
\end{equation}
respectively,
 then there exists a unique pair $(u, v)$ of non-negative functions:
  $$
\left\{\begin{array}{rl}
&\disp{u\in C^0(\bar{\Omega}\times[0,\infty))\cap C^{2,1}(\bar{\Omega}\times(0,\infty)),}\\
&\disp{v\in C^0(\bar{\Omega}\times[0,\infty))\cap C^{2,1}(\bar{\Omega}\times(0,\infty)),}\\
\end{array}\right.
$$
 which solves \dref{111.ssderrfff1}  classically. Moreover, the solution of \dref{111.ssderrfff1} is bounded
in $\Omega\times (0,\infty)$.
%
%
%
%
%
%
%
%
%
\end{theorem}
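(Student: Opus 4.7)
The plan divides into three main stages. First, standard parabolic theory (Amann-type results for triangular quasilinear systems, or a contraction-mapping argument in Hölder/Sobolev classes) yields a unique nonnegative classical solution $(u,v)$ on a maximal interval $[0,T_{\max})$ with the extensibility criterion that if $T_{\max} < \infty$, then $\limsup_{t \nearrow T_{\max}}(\|u(\cdot,t)\|_{L^\infty(\Omega)} + \|v^{-1}(\cdot,t)\|_{L^\infty(\Omega)}) = \infty$. Strict positivity of $v$ follows from the parabolic maximum principle together with $v_0 > 0$ on $\bar\Omega$. To control the second term in the criterion, a uniform pointwise lower bound $v(\cdot,t) \geq m > 0$ on $\Omega \times [0, T_{\max})$ is needed. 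The main tool is the variation-of-constants representation
\begin{equation*}
v(x,t) = e^{-t}\bigl(e^{t\Delta}v_0\bigr)(x) + \int_0^t e^{-(t-s)}\bigl(e^{(t-s)\Delta}u(\cdot,s)\bigr)(x)\,ds,
\end{equation*}
combined with a persistence estimate $\int_\Omega u(\cdot,t)\,dx \geq \delta > 0$ extracted from the logistic dynamics of $y(t) = \int_\Omega u$; the positivity-improving property of the Neumann heat semigroup then delivers the desired bound.

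The heart of the proof is an $L^p$ estimate on $u$ uniform in $t$, for any $p$ sufficiently large. Testing the first equation with $pu^{p-1}$, integrating by parts, and using Young's inequality on the chemotactic term produces
\begin{equation*}
\frac{d}{dt}\int_\Omega u^p + \frac{p(p-1)}{2}\int_\Omega u^{p-2}|\nabla u|^2 \leq \frac{p(p-1)\chi^2}{2}\int_\Omega \frac{u^p |\nabla v|^2}{v^2} + pa\int_\Omega u^p - p\mu \int_\Omega u^{p+1}.
\end{equation*}
The singular integral on the right, after invoking $v \geq m$, reduces to controlling $\int u^p|\nabla v|^2$. A companion inequality for $\int_\Omega |\nabla v|^{2r}$ obtained by testing the $v$-equation with $-\nabla\cdot(|\nabla v|^{2r-2}\nabla v)$, combined with a Gagliardo-Nirenberg interpolation, lets one close the system. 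The two parameter hypotheses enter at precisely this step: the restriction \dref{x1sssss.7314kkkkkk26kkklll677gg} is exactly what the Gagliardo-Nirenberg-Young chain requires in order to leave the dissipative terms $\int u^{p-2}|\nabla u|^2$ and $\int u^{p+1}$ dominant over the singular cross-diffusion integral, while \dref{x1sssss.731426kkklll677gg} guarantees that the zero-order growth $pa\int u^p$ is absorbed into the logistic damping $p\mu\int u^{p+1}$ via Young's inequality. The net output is a Gronwall-type differential inequality $F'(t) + c_1 F(t) \leq c_2$ with $F(t) = \int_\Omega u^p$, and hence a uniform bound on $\|u(\cdot,t)\|_{L^p(\Omega)}$.

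Once $u$ is bounded in $L^p$ for arbitrarily large $p$, standard parabolic smoothing applied to $v_t = \Delta v - v + u$ furnishes $\|\nabla v(\cdot,t)\|_{L^\infty(\Omega)} \leq C$. Substituting this back into the first equation, a Moser iteration or an $L^p$--$L^\infty$ semigroup argument yields a uniform bound on $\|u(\cdot,t)\|_{L^\infty(\Omega)}$. Together with the pointwise lower bound on $v$, the extensibility criterion is violated, so $T_{\max} = \infty$ and the announced boundedness follows. The principal difficulty I anticipate lies in the second stage: quantitatively balancing the singular chemotactic integral $\int u^p|\nabla v|^2/v^2$ against the available dissipation and logistic absorption in arbitrary dimension, within the sharp parameter ranges \dref{x1sssss.731426kkklll677gg} and \dref{x1sssss.7314kkkkkk26kkklll677gg}. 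Since, as the introduction notes, the previous literature was confined either to $N = 2$ or to the parabolic-elliptic simplification, it is precisely at this balancing step that the new technical work must take place.
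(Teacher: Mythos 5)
Your outline has the right high-level shape (local existence, lower bound for $v$, $L^p$ bound, semigroup/Moser upgrade to $L^\infty$), but two of the core steps, as you describe them, contain real gaps precisely at the places where the paper has to work hard.

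First, the lower bound $\inf_t\int_\Omega u\,dx\geq\delta>0$ is \emph{not} a consequence of the logistic dynamics of $y(t)=\int_\Omega u$. The ODE satisfied by $y$ is $y'(t)=a y-\mu\int_\Omega u^2$, and since $\int_\Omega u^2\geq y^2/|\Omega|$ this only yields the one-sided estimate $y'\leq a y-\tfrac{\mu}{|\Omega|}y^2$, i.e.\ an \emph{upper} bound on $y$; no lower bound on $\int_\Omega u^2$ (hence no persistence for $y$) is available a priori. This is exactly the difficulty the paper flags in the introduction: for the non-logistic system mass conservation gives $\int_\Omega u\equiv\int_\Omega u_0>0$, but once the quadratic absorption $-\mu u^2$ is present that argument collapses. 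The paper's actual route is to control the weighted quantity $\int_\Omega u^{-p}v^{-q}$ (Lemmas 3.2--3.3), deduce from it a uniform bound on $\int_\Omega u^{-\alpha}$, and only then, via H\"older and the variation-of-constants formula, obtain $\inf_\Omega v\geq\eta_0>0$ (Lemma 3.4). The hypothesis \dref{x1sssss.731426kkklll677gg} on $a$ is used at this step (to make the interval of admissible $p$ nonempty), so your proposal never explains where that assumption actually enters.

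Second, the $L^p$ step as you set it up -- test with $pu^{p-1}$, replace $v\geq m$ to desingularize, then couple with a $\int_\Omega|\nabla v|^{2r}$ estimate -- does not deliver the advertised hypothesis $\chi<\sqrt{2/N}$. Once you replace $\chi/v$ by the crude bound $\chi/m$, the ensuing Gagliardo--Nirenberg/Young chain produces a smallness condition on $\chi/m$ rather than on $\chi$; since $m=\eta_0$ itself depends on $\chi$, $a$, $\mu$ and the data, this is circular and, in any case, is not the stated theorem. The paper instead keeps the singular weight inside the functional and works with $\int_\Omega u^p v^{-q}$ (Lemma 3.5): testing with $u^{p-1}v^{-q}$ creates a quadratic form in $q$ whose discriminant is $1-p\chi^2$, and nonemptiness of the admissible $q$-interval requires $p<1/\chi^2$. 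To reach $p=\kappa>N/2$ one therefore needs $\chi^2<2/N$, which is exactly where \dref{x1sssss.7314kkkkkk26kkklll677gg} is used (Corollary 3.1). (The paper also remarks explicitly that a maximal-regularity coupling of $\int u^p$ with $\int|\Delta v|^q$ does not extend to $N\geq 3$ here, which is another indication that the conventional two-functional scheme you propose is not straightforward to close.) Once $\|u(\cdot,t)\|_{L^\kappa}$ is bounded with $\kappa>N/2$, the remainder of your plan (semigroup estimates for $\nabla v$, then Moser/$L^p$--$L^\infty$ iteration for $u$) matches the paper's Lemma 3.6.
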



\begin{remark}

 %

(i) We should point that
in view of singular sensitivities,
 a variation of Maximal Sobolev Regularity can not be used to solve problem \dref{111.ssderrfff1}
 (see \cite{Cao33444334,Zhengjmaa22}), since, it is hard to estimate  $\int_{\Omega}|\nabla v|^{2q}$ by using the boundedness of $\int_{\Omega}|\Delta v|^{q}$ for $N\geq3$.

 (ii) We should point  that the idea of this paper can be also solved with other types of models, e.g.
 an chemotaxis-growth model with indirect
attractant production
 and singular sensitivity in a Keller-Segel-fluid system with logistic source.

\end{remark}


Going beyond these boundedness statements, to the best of our knowledge, only few results on the  large time behavior were
studied in  chemotaxis model \dref{111.ssderrfff1}
%
(see e.g. Winkler  et al. \cite{Tao44621,Winkler79312vv,Winkler79312}, Galakhov et al. \cite{Galakhov1230}).
For instance, 
if {\bf $\chi_0(v)=\chi>0$},
Winkler (\cite{Winkler79312vv}) found that the   solutions of one dimensional parabolic--elliptic models \dref{111.ssderrfff1}
 may become large at intermediate time scales provided
that $a=\mu<1$.
On the other hand, 
 in \cite{Winkler79312} it was found
that all the solutions of the  Keller-Segel system  \dref{111.ssderrfff1} with $a= 1$ and $\chi_0(v)=\chi>0$ converge to
$(\frac{1}{\mu}, \frac{1}{\mu})$ exponentially for a suitable small value of $\frac{\chi}{\mu}$ and {\bf convex domain} $\Omega$.
 Recently, by applying  a variation of {\bf Maximal Sobolev Regularity},
\cite{Xiangggg}   and \cite{Zhengssddghhhhjmaa22}  (see also \cite{Cao33444334}) improve the results of
 \cite{Winkler79312} to
a bounded non-convex domain. As compared to this, the  {\bf large time behavior}  to Keller-Segel system \dref{111.ssderrfff1} with {\bf singular sensitivity} seems to be much less understood.
To the best of our knowledge, not even one dimensional   result  for large time behavior seems available, due to the challenges lies in this problem.
%


Motivated by the above works, it seems natural  that our second result,
addressing asymptotic homogenization of all solution components, requires $\mu$ to be
appropriately large. Our result in this direction can be stated as follows:

\begin{theorem}\label{theoremssdddffggdd3} Assume the hypothesis of Theorem \ref{theorem3}
holds. 
 Then there exists $\mu_0 > 0$ depending  on $a,\chi ,v_0$ and $\Omega$
  such that if
\begin{equation}\label{fvgbccvvhnjmkfgbdffrhnjjn6291}\mu>\mu_0,
\end{equation}
one can find $\gamma>0$ as well as $t_0$ and $C>0$ such that the global classical solution $(u, v)$ of \dref{111.ssderrfff1} satisfies
\begin{equation}\label{fvgbccvvhnjmkfgbdffrhnn6291}
\|u(\cdot,t)-\frac{a}{\mu }\|_{L^\infty(\Omega)}\leq Ce^{-\gamma t},  ~~\mbox{for all}~~t>t_0
\end{equation}
and
\begin{equation}\label{ffgfvgbccvvhnjmkfghhjjgbhnn6291}
\|v(\cdot,t)-\frac{a}{\mu }\|_{L^{\infty}(\Omega)}\leq Ce^{-\gamma t},  ~~\mbox{for all}~~t>t_0.
\end{equation}
\end{theorem}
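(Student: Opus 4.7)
The plan is to build a Lyapunov functional adapted to the singular sensitivity whose dissipation becomes strictly coercive once $\mu$ is sufficiently large, and then to bootstrap the resulting $L^2$--in--time integrability into pointwise exponential convergence. Set $u_\star := v_\star := a/\mu$ and, for a constant $A>0$ to be fixed below, consider
$$E(t) := \int_\Omega \Big(u - u_\star - u_\star \ln\frac{u}{u_\star}\Big)\,dx + \frac{A}{2}\int_\Omega (v-v_\star)^2\,dx.$$
The first integrand is the classical relative entropy and is nonnegative; the strong maximum principle applied to the $u$--equation (valid since $u_0 \not\equiv 0$) forces $u>0$ on $\bar\Omega\times(0,\infty)$, so $E$ is well defined. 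The key computation is $\tfrac{d}{dt}E(t)$: integrating the $u$--equation against $1 - u_\star/u$ and the $v$--equation against $A(v-v_\star)$, using the Neumann condition, and invoking the algebraic identity $(au-\mu u^2)(1-u_\star/u) = -\mu(u-u_\star)^2$ (which holds because $a = \mu u_\star$), one obtains
$$\frac{dE}{dt} = -u_\star\!\int_\Omega\!\frac{|\nabla u|^2}{u^2} + \chi u_\star\!\int_\Omega\!\frac{\nabla u\cdot\nabla v}{uv} - \mu\!\int_\Omega\!(u-u_\star)^2 - A\!\int_\Omega\!|\nabla v|^2 - A\!\int_\Omega\!(v-v_\star)^2 + A\!\int_\Omega\!(v-v_\star)(u-u_\star).$$

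Next I invoke the positive pointwise lower bound $v(x,t)\geq \eta >0$ (with $\eta$ depending on $a,\chi,v_0,\Omega$) established during the proof of Theorem~\ref{theorem3}. Young's inequality gives $\chi u_\star \int \nabla u\cdot\nabla v/(uv) \leq \tfrac{u_\star}{2}\int |\nabla u|^2/u^2 + \tfrac{\chi^2 u_\star}{2\eta^2}\int |\nabla v|^2$, which can be absorbed into the $-A\int|\nabla v|^2$ term once $A$ exceeds $\chi^2 u_\star/(2\eta^2)$. A second Young's inequality on $A(v-v_\star)(u-u_\star)$ then splits that cross term into halves of $(u-u_\star)^2$ and $(v-v_\star)^2$ with weights controlled by $\mu$. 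Picking $A \sim \chi^2 u_\star/\eta^2$ and requiring $\mu > \mu_0$ with $\mu_0$ large enough that the resulting coefficients of $\int(u-u_\star)^2$ and $\int(v-v_\star)^2$ are both strictly negative (this is exactly the threshold $\chi^2 u_\star < 2\mu\eta^2$, equivalently $\chi^2 a < 2\mu^2\eta^2$), I arrive at
$$\frac{dE}{dt} \leq -c_1 \int_\Omega (u-u_\star)^2 - c_2 \int_\Omega(v-v_\star)^2$$
with $c_1,c_2>0$ independent of $t$. Integrating in time and using $E\geq 0$ yields $\int_0^\infty\!\int_\Omega\![(u-u_\star)^2+(v-v_\star)^2]\,dx\,dt < \infty$.

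To upgrade this to pointwise exponential decay, I first combine Theorem~\ref{theorem3} with standard parabolic Schauder and $L^p$ theory to secure a uniform--in--time H\"older bound $\|u(\cdot,t)\|_{C^\alpha(\bar\Omega)} + \|v(\cdot,t)\|_{C^\alpha(\bar\Omega)} \leq K$ for $t\geq 1$. A routine compactness/contradiction argument blending this with the $L^2$--in--time integrability above forces $\|u(\cdot,t)-u_\star\|_{L^\infty(\Omega)} + \|v(\cdot,t)-v_\star\|_{L^\infty(\Omega)} \to 0$ as $t\to\infty$, so there exists $t_0$ with $u\geq u_\star/2$ and $v\geq v_\star/2$ throughout $\bar\Omega\times[t_0,\infty)$. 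On this tail, Taylor expansion of the relative entropy density yields the two--sided comparison $\tilde c_1(u-u_\star)^2 \leq u-u_\star-u_\star\ln(u/u_\star) \leq \tilde c_2 (u-u_\star)^2$, so that $E(t)$ is equivalent to $\int[(u-u_\star)^2+(v-v_\star)^2]$. The dissipation inequality then upgrades to $E'(t) \leq -2\gamma E(t)$ on $[t_0,\infty)$, and Gr\"onwall produces $L^2$ exponential decay of $u-u_\star$ and $v-v_\star$. Interpolating these against the uniform $C^\alpha$ bound via $\|w\|_{L^\infty} \leq C \|w\|_{C^\alpha}^\theta \|w\|_{L^2}^{1-\theta}$ (for a suitable $\theta \in (0,1)$) delivers the $L^\infty$ exponential decay claimed in the theorem, up to relabelling $\gamma$.

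The central obstacle is the singular chemotactic term in $dE/dt$: without the pointwise lower bound for $v$, the contribution $\int |\nabla v|^2/v^2$ is uncontrollable and the scheme collapses. The whole argument therefore leans on the lower--bound mechanism already embedded in the proof of Theorem~\ref{theorem3}, and the main bookkeeping difficulty is tracking how $\eta$, and thereby $\mu_0$, depends on $a,\chi,v_0,\Omega$ so that the threshold condition is genuine. A secondary subtlety is the eventual--positivity step in the bootstrap, which is what turns the entropy functional into a bona fide $L^2$--norm and thereby unlocks the exponential rate; without that step only a subexponential decay would follow directly from the dissipation.
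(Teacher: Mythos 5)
Your proposal is correct and follows essentially the same route as the paper: the functional $E$ is exactly the paper's $F(t)=\int_\Omega(U-1-\ln U)+\tfrac{L}{2}\int_\Omega V^2$ up to the constant factor $u_\star=a/\mu$, the dissipation inequality is obtained by the same Young--inequality absorptions (using the pointwise lower bound on $v$) and the same smallness threshold $\chi^2 a k_0<2\mu^2$, and the upgrade to exponential $L^\infty$ decay proceeds through the same three steps (uniform H\"older/$W^{1,\infty}$ bounds and compactness to get $L^\infty$ convergence, local Taylor comparison $c_1(U-1)^2\le U-1-\ln U\le c_2(U-1)^2$ to identify $F$ with the $L^2$ deficit, Gr\"onwall plus interpolation). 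The only cosmetic difference is that you interpolate against a $C^\alpha$ bound whereas the paper interpolates against $W^{1,\infty}$ via Gagliardo--Nirenberg, which is immaterial.
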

%
%
%
%
%
%
%
%
\begin{remark}


%


 (i) Theorem \ref{theoremssdddffggdd3} extends   the results of  Theorem 1.1   (\cite{Winkler79312}),
where   the convexity of $\Omega$
 required in \cite{Winkler79312}.

 (ii) It should be mentioned that
   the idea of this paper can be also solved with other types of models, e.g.
 an chemotaxis-growth model with indirect
attractant production and singular sensitivity.

%
\end{remark}


It is worth to remark the main idea underlying the  proof of our results.
The key step  to the proof of Theorem \ref{theorem3} is to establish a positive uniform-in-time lower bound for $v,$
which is equivalent to obtain
$\inf_{0\leq t<\infty} \|u(¡¤, t)\|_{L^1(\Omega)} > 0$ (see Lemma \ref{lemmssssdddssssssa45ddfffff630223}), and can be transformed to build
the global boundedness for 
a weighted integral of the form $\int_{\Omega}u^{-p}v^{-q}dx $
introduced
for system \dref{111.ssderrfff1} with suitable $p, q > 0$ to be determined (see Lemmas \ref{lemmsssa45ddfffff630223}--\ref{lemmssssdddsa45ddfffff630223}).
The technical advantage of small values of $\chi$ (see \dref{x1sssss.7314kkkkkk26kkklll677gg}) is that these will allow us to pick some $\kappa>\frac{N}{2}$,
$q_0\in (0,\frac{N}{2})$
and $C_0>0$
 such that
$$
\begin{array}{rl}
&\disp{\int_{\Omega}u^{\kappa}v^{-q_0}\leq \frac{C_0}{\mu}~~~\mbox{for all}~~t\in(0,T_{max}),}
\\
\end{array}
$$
so that, implies the boundedness of  $L^{\frac{N}{2}+\varepsilon}(\Omega)$ by using  the variation-of-constants formula.
Then we  use  the standard estimate for Neumann semigroup  and the standard Alikakos--Moser iteration (see e.g.  Lemma A.1 of \cite{Tao794}) to show Theorem \ref{theorem3}.

In order
to prove  Theorem \ref{theoremssdddffggdd3} we will find a nonnegative function $F$ satisfying
$$
\begin{array}{rl}
&\disp{F'(t):=\frac{d}{dt}\left(\int_{\Omega}(U-1-\ln U)+\frac{L}{2} \disp\int_{\Omega}V^2\right)\leq-G_0(\int_{\Omega}(U-1)^2+\frac{L}{2}\int_{\Omega}V^2)}
\\
\end{array}
$$
 with some suitable positive numbers $L$ and $G_0$ (see Lemma \ref{lemmaddffffdfffgg4dddd5630}) depending on the positive pointwise {\bf lower bound} of $v$.
Then,
by means of an analysis of the above
inequality and the uniform H\"{o}lder estimates (see Lemma \ref{fghfbglemma4563025xxhjklojjkkkgyhuissddff}),
one can establish
$\lim_{t\rightarrow+\infty}(\|u(\cdot,t)-\frac{a}{\mu }\|_{L^\infty(\Omega)}+\|v(\cdot,t)-\frac{a}{\mu }\|_{L^\infty(\Omega)})=0$
(see Lemmas \ref{fghfbglemma4563025xxhjklojjkkkgyhuissddff} and \ref{fhhghfbgddddlemma4563025xxhjklojjkkkgyhuissddff}).
By interpolation, we can thus assert the claimed uniform exponential stabilization
property.
%
We can thereupon make use of the interpolation  and the spatial regularity the solution $(u,v)$ (Lemma \ref{fghfbglemma4563025xxhjklojjkkkgyhuissddff})
to show
that the above convergence actually takes place at an exponential rate (the proof of Theorem \ref{theoremssdddffggdd3}).

\section{Preliminaries}

%
%
In this section,
we first state several elementary
lemmas which will be needed later.
 \begin{lemma}(Page 126 of \cite{Nirenbergfgger79312})\label{lemma4sddd1ffgg}
Let $\Omega$
 be a bounded Lipschitz domain in $R^N$, $p, q, r, s \geq 1, j,m \in \mathbb{N}_0$ and $\alpha\in [\frac{j}{m}, 1]$
satisfying $\frac{1}{p} = \frac{j}{m} + (\frac{1}{r}-\frac{m}{N})\alpha+ \frac{1-\alpha}{q}$. Then there are positive constants $C_1$ and $C_2$ such that for all
functions $w\in L^q(\Omega)$ with $\nabla w \in L^r(\Omega),$ $w\in L^s(\Omega)$,
 %
%
$$\| D^jw\|_{L^{p}(\Omega)} \leq C_{1}\|D^mw\|_{L^{r}(\Omega)}^{\alpha}\|w\|^{1-\alpha}_{L^{q(\Omega)}}+C_2\|w\|_{L^{s}(\Omega)}.$$
\end{lemma}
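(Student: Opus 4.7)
The plan is to prove this interpolation inequality by the classical two-step route: first establish the scale-invariant inequality on the whole space $\mathbb{R}^N$, then transfer to the bounded Lipschitz domain $\Omega$ using an extension operator, paying for the transfer with the additive $C_2\|w\|_{L^s(\Omega)}$ correction. The scaling relation $\frac{1}{p}=\frac{j}{m}+\bigl(\frac{1}{r}-\frac{m}{N}\bigr)\alpha+\frac{1-\alpha}{q}$ is exactly the one enforced by a dilation $w(x)\mapsto w(\lambda x)$, which is the natural consistency check at each stage.

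First I would handle $\mathbb{R}^N$, where the aim is the scale-invariant bound $\|D^jw\|_{L^p(\mathbb{R}^N)}\leq C\|D^mw\|_{L^r(\mathbb{R}^N)}^\alpha\|w\|_{L^q(\mathbb{R}^N)}^{1-\alpha}$. The standard route is to first treat the endpoint $j=0$ via an integral representation $w=I_m\ast(D^mw)$ (Riesz potential) combined with the Hardy--Littlewood--Sobolev inequality and H\"older, giving the estimate for the exponent at $\alpha=1$; the intermediate $\alpha\in(j/m,1)$ then comes from logarithmic convexity of $L^p$-norms along a one-parameter family. The mixed-derivative case $j\geq 1$ is then obtained by applying the $j=0$ inequality to $D^jw$ and absorbing excess derivative orders through standard calculus estimates. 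An alternative I would keep in reserve is the Littlewood--Paley characterization of $W^{k,p}$ combined with Bernstein's inequalities, which mechanizes all of the above into a single interpolation estimate in Besov scales.

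For the bounded-domain inequality I would invoke a Stein-type extension operator $E:W^{m,r}(\Omega)\cap L^q(\Omega)\to W^{m,r}(\mathbb{R}^N)\cap L^q(\mathbb{R}^N)$, whose existence for Lipschitz $\Omega$ is classical. Applying Step~1 to $Ew$ and restricting produces
\[
\|D^jw\|_{L^p(\Omega)}\leq C\bigl(\|w\|_{W^{m,r}(\Omega)}\bigr)^\alpha\|w\|_{L^q(\Omega)}^{1-\alpha},
\]
where the right-hand side involves the full Sobolev norm rather than just $\|D^mw\|_{L^r}$. The remaining job is to convert $\|w\|_{W^{m,r}(\Omega)}$ into $\|D^mw\|_{L^r(\Omega)}$ plus the $L^s$ correction: write $\|w\|_{W^{m,r}(\Omega)}\leq\|D^mw\|_{L^r(\Omega)}+\|w\|_{L^r(\Omega)}$, interpolate $\|w\|_{L^r(\Omega)}$ between $L^q$ and $L^s$ if needed, apply Young's inequality to split the product $\|D^mw\|_{L^r}^\alpha\|w\|_{L^q}^{1-\alpha}$ from the lower-order residue, and collect the residue into $C_2\|w\|_{L^s(\Omega)}$.

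The main obstacle I anticipate is not any single calculation but the bookkeeping of parameter ranges: one must verify that every choice of $(p,q,r,s,j,m,\alpha)$ compatible with $\alpha\in[j/m,1]$ and the scaling identity falls into the admissible regime for both the $\mathbb{R}^N$ estimate (excluding the borderline BMO/Lipschitz endpoints where Riesz potentials fail to map $L^r\to L^p$ boundedly) and the extension step (where $s\geq 1$ is needed to meaningfully bound the residual). For this paper's applications all exponents stay well within the subcritical range, so these edge cases can be avoided rather than resolved, which is why citing Nirenberg's original note is the cleaner route the authors take.
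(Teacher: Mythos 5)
The paper offers no proof of this lemma; it is imported verbatim from Nirenberg's 1959 note, as the citation in the statement indicates, so there is no in-paper argument to benchmark you against. Judged on its own terms, your $j=0$ step is sound: there the general $\alpha$ really does follow from the Sobolev embedding $\|w\|_{L^{p^*}(\mathbb{R}^N)}\leq C\|D^m w\|_{L^r(\mathbb{R}^N)}$ (with $1/p^*=1/r-m/N$) composed with the log-convexity $\|w\|_{L^p}\leq\|w\|_{L^{p^*}}^{\alpha}\|w\|_{L^q}^{1-\alpha}$, and the Stein extension step for Lipschitz domains is standard.

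The genuine gap is in the sentence ``the mixed-derivative case $j\geq1$ is then obtained by applying the $j=0$ inequality to $D^jw$ and absorbing excess derivative orders through standard calculus estimates.'' The hard endpoint of the theorem is $\alpha=j/m$ with $j\geq1$, where the Riesz-potential route contributes nothing: at that value the scaling relation degenerates to $\frac{1}{p}=\frac{j}{m}\cdot\frac{1}{r}+\bigl(1-\frac{j}{m}\bigr)\cdot\frac{1}{q}$, a pure exponent interpolation with no Sobolev gain. Log-convexity cannot bridge it either, since it compares one function's norms at different exponents, whereas here $D^jw$, $D^mw$ and $w$ are different objects. And applying your $j=0$ estimate to $D^jw$ leaves a term $\|D^jw\|_{L^{q'}}$ on the right, which is another instance of the very inequality you are proving, so the reduction loops. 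That endpoint is exactly what Nirenberg proves from scratch, by elementary one-dimensional integration-by-parts estimates iterated across coordinate directions --- a quite different and more self-contained argument than the harmonic-analytic route you sketch, and one that sidesteps the borderline-exponent worries you raise. The $L^s$-correction bookkeeping is a second, lesser issue (for $s<\min(q,r)$ H\"older on a bounded domain runs the wrong way), but you correctly flag that the exponents used downstream in this paper avoid that corner. Incidentally, a symbolic run of the dilation check you propose would reveal that the constraint as printed ($\frac{1}{p}=\frac{j}{m}+\dots$) has a typo for Nirenberg's $\frac{j}{N}$.
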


\begin{lemma}(\cite{Fujie3344,Winkler79312,Winkler792,Zhengghhhhjmaa22})\label{llssdrffmmggnnccvvccvvkkkkgghhkkllvvlemma45630}
Let $(e^{\tau\Delta})_{\tau\geq0}$ be the Neumann heat semigroup in $\Omega$, and $\lambda_1 > 0$ is the
first nonzero eigenvalue of $-\Delta$ in  $\Omega\subset \mathbb{R}^\mathbb{N}$ under the Neumann boundary condition. Then
there exist $c_i=c_i(\Omega)(i=1,2,3,4)$ depending on $\Omega$ such that the following estimates hold.
(i) If $1 \leq q \leq p\leq \infty$, then
$$
\| e^{\tau\Delta}\varphi\|_{L^p(\Omega)} \leq c_1(1+\tau^{-\frac{N}{2}(\frac{1}{q}-\frac{1}{p})})e^{-\lambda_1\tau }\|\varphi\|_{L^q(\Omega)} ~~~\mbox{for all}~~~ \tau > 0~~~\mbox{and any}~~~\varphi\in L^{q}(\Omega)~~\mbox{and}~~\int_{\Omega}\varphi=0.
$$

(ii) If $1 \leq q \leq p\leq \infty$, then
$$
\|\nabla e^{\tau\Delta}\varphi\|_{L^p(\Omega)} \leq c_2(1+\tau^{-\frac{1}{2}-\frac{N}{2}(\frac{1}{q}-\frac{1}{p})})e^{-\lambda_1\tau }\|\varphi\|_{L^q(\Omega)} ~~~\mbox{for all}~~~ \tau > 0~~~
$$
holds and any $\varphi\in L^{q}(\Omega)$.

(iii) If $2 \leq q \leq p< \infty$, then
$$
\|\nabla e^{\tau\Delta}\varphi\|_{L^p(\Omega)} \leq c_3(1+\tau^{-\frac{N}{2}(\frac{1}{q}-\frac{1}{p})})e^{-\lambda_1\tau }\|\nabla \varphi\|_{L^q(\Omega)} ~~~\mbox{for all}~~~ \tau > 0
$$
holds and any $\varphi\in W^{1,p}(\Omega)$.

(iv) If $1 < q \leq p\leq \infty$, then
\begin{equation}
\begin{array}{rl}
&\| e^{\tau\Delta}\nabla\cdot\varphi\|_{L^p(\Omega)} \\
\leq& c_4(1+\tau^{-\frac{1}{2}-\frac{N}{2}(\frac{1}{q}-\frac{1}{p})})\|\varphi\|_{L^q(\Omega)} ~\mbox{for all}~ \tau > 0\\
\end{array}
\label{gggghjjccmmllffvvggcvvvvbbjjkkdffzjscz2.5297x9630xxy}
\end{equation}
holds for all $\varphi\in (L^q(\Omega))^N$.
\end{lemma}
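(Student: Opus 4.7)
The plan is to split the analysis at $\tau=1$ and combine short-time smoothing with long-time spectral decay. First I would establish pointwise Gaussian upper bounds for the Neumann heat kernel $K(x,y,\tau)$ on $\Omega$: on a smooth bounded domain one has
\[
0\le K(x,y,\tau) \le C\tau^{-N/2}\exp\left(-\frac{|x-y|^{2}}{c\tau}\right)
\]
for $0<\tau\le 1$, with corresponding gradient bounds carrying an extra factor $\tau^{-1/2}$. These estimates can be produced by a parametrix construction with reflection across $\partial\Omega$ localized through a smooth partition of unity, or by appealing to Davies--Gaffney off-diagonal bounds for self-adjoint semigroups. Young's convolution-type inequality applied to the kernel then yields the short-time $L^q$--$L^p$ estimate $\tau^{-\frac{N}{2}(\frac{1}{q}-\frac{1}{p})}\|\varphi\|_{L^q}$ needed in (i)--(iv) on $(0,1]$.

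For the long-time regime $\tau>1$, I would use that $-\Delta$ with zero Neumann data is self-adjoint and non-negative on $L^{2}(\Omega)$ with discrete spectrum $0=\lambda_{0}<\lambda_{1}\le\lambda_{2}\le\cdots$, the kernel of $\lambda_{0}$ being spanned by the constants. In (i) the hypothesis $\int_{\Omega}\varphi=0$ removes this component, so spectral decomposition gives $\|e^{\tau\Delta}\varphi\|_{L^{2}}\le e^{-\lambda_{1}\tau}\|\varphi\|_{L^{2}}$. Writing $e^{\tau\Delta}=e^{\frac{1}{2}\Delta}\circ e^{(\tau-1)\Delta}\circ e^{\frac{1}{2}\Delta}$ for $\tau\ge 2$ and sandwiching this spectral bound between two short-time smoothing factors produces the combined estimate $(1+\tau^{-\frac{N}{2}(\frac{1}{q}-\frac{1}{p})})e^{-\lambda_{1}\tau}\|\varphi\|_{L^{q}}$ asserted in (i). The same sandwich strategy handles the gradient and divergence variants, where a single gradient factor costs an additional $\tau^{-1/2}$ only in the short-time piece while the exponential decay is unaffected.

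For (iii), when $\varphi\in W^{1,p}(\Omega)$ I would exploit that $\nabla$ essentially commutes with $e^{\tau\Delta}$ on the space where the Neumann trace is respected, moving the gradient past the semigroup before applying the $L^{q}$--$L^{p}$ smoothing; this removes the otherwise-needed $\tau^{-1/2}$ factor and reproduces the prefactor in (iii). For (iv), one may either argue by $L^{p'}$--$L^{q'}$ duality from (ii) or derive the bound directly from kernel estimates on $\nabla_{y}K(x,y,\tau)$; both routes give the $(1+\tau^{-\frac{1}{2}-\frac{N}{2}(\frac{1}{q}-\frac{1}{p})})$ shape, with no decaying exponential because the result is stated for all $\tau>0$ without zero-mean hypothesis on the vector field $\varphi$.

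The main obstacle is establishing the Gaussian kernel and gradient-kernel estimates up to the curved boundary under Neumann conditions on a possibly non-convex domain, since the reflection principle is transparent only on a half-space and must be patched together through a partition-of-unity/parametrix argument. Because the four estimates are classical and already compiled in the cited works \cite{Fujie3344,Winkler79312,Winkler792,Zhengghhhhjmaa22}, I would in practice just appeal to those references rather than reconstruct the heat-kernel machinery here.
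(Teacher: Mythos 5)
The paper gives no proof of this lemma at all; it is stated with the citations \cite{Fujie3344,Winkler79312,Winkler792,Zhengghhhhjmaa22} and used as a black box, which is precisely what your proposal ultimately recommends, so your bottom-line approach matches the paper's. Your sketch of how those cited estimates are actually obtained (short-time Gaussian/gradient kernel bounds, long-time spectral decay on the zero-mean subspace, glued by the semigroup splitting $e^{\tau\Delta}=e^{\frac12\Delta}e^{(\tau-1)\Delta}e^{\frac12\Delta}$) is a fair account of the standard route, essentially Winkler's Lemma 1.3. Two small inaccuracies in the sketch are worth noting, though neither affects the conclusion since you defer to the references. For (iii), ``move $\nabla$ past $e^{\tau\Delta}$'' is the genuinely delicate step: $\nabla$ does not commute with the Neumann semigroup, and the standard proof of the $p=q$ base case instead shows that $|\nabla e^{\tau\Delta}\varphi|^2$ is a subsolution of a heat-type equation with nonpositive Neumann flux, which is immediate on convex domains and requires an extra localization/perturbation argument on general smooth domains. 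For (iv), your explanation that the exponential factor is absent because $\varphi$ need not have zero mean is not quite the right reason; in the cited version $e^{\tau\Delta}\nabla\cdot$ is defined first on compactly supported vector fields (so the divergence automatically has zero mean) and extended by density, and the estimate does carry an $e^{-\lambda_1\tau}$ factor — the version stated here has simply dropped it, which is a harmless weakening.
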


The following local existence result is rather standard, since a similar reasoning in
 \cite{Cie72,Wang76,Wang72,Zheng0,Zhaoffgg}. Therefore,
 we omit it here.
\begin{lemma}\label{lemma70}
 Let $\Omega\subset\mathbb{R}^N(N\geq1)$ be a smooth bounded  domain.
Assume that the nonnegative functions $u_0$ and $v_0$ satisfies \dref{x1.731426677gg}.
%
%
 Then for any $a\in R$ and $\mu>0,$
 there exist
  a maximal existence time $T_{max}\in(0,\infty]$ and a pair  of  nonnegative functions 
 $$
\left\{\begin{array}{rl}
&\disp{u\in C^0(\bar{\Omega}\times[0,T_{max}))\cap C^{2,1}(\bar{\Omega}\times(0,T_{max})),}\\
&\disp{v\in C^0(\bar{\Omega}\times[0,T_{max}))\cap C^{2,1}(\bar{\Omega}\times(0,T_{max})),}\\
\end{array}\right.
$$
 which solves \dref{111.ssderrfff1}  classically and satisfies $u,v>0$
  in $\Omega\times(0,T_{max})$.
%
Moreover, if  $T_{max}<+\infty$, then
$$
\|u(\cdot, t)\|_{L^\infty(\Omega)}+\| v(\cdot, t)\|_{W^{1,\infty}(\Omega)}\rightarrow\infty~~ \mbox{as}~~ t\nearrow T_{max}.
$$
\end{lemma}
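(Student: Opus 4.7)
The plan is to build a local-in-time classical solution by a contraction-mapping argument applied to the system once the singular factor $\chi/v$ is brought under control on a short time interval. Because $v_0\in W^{2,\infty}(\Omega)$ is continuous and strictly positive on the compact set $\bar\Omega$, fix $\delta_0>0$ with $v_0\geq 2\delta_0$ in $\bar\Omega$. For $T>0$ small and $R>0$ large, work on the closed set
\[
\mathcal{S}_{T,R}:=\bigl\{\,(\tilde u,\tilde v)\in C^0(\bar\Omega\times[0,T])^2 : \|\tilde u\|_{L^\infty}+\|\tilde v\|_{W^{1,\infty}}\leq R,\ \tilde v\geq \delta_0\,\bigr\},
\]
and define a map $\Phi$ that first solves the linear heat equation $v_t=\Delta v-v+\tilde u$ with Neumann data and initial value $v_0$, thereby producing $v$ whose $W^{1,\infty}$-bound is controlled by Lemma~\ref{llssdrffmmggnnccvvccvvkkkkgghhkkllvvlemma45630}, and then solves the linear problem $u_t=\Delta u-\chi\nabla\cdot(\frac{u}{v}\nabla v)+au-\mu\tilde u\,u$ for $u$ with Neumann data and initial value $u_0$. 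Standard linear parabolic theory (as invoked in the references cited) makes each subproblem uniquely solvable, and, shrinking $T$ depending on $R$ and $\delta_0$, continuity preserves $v\geq \delta_0$ while $\Phi$ becomes a contraction; Banach's fixed-point theorem then yields a classical solution on $[0,T]$.

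Positivity follows from parabolic comparison arguments once existence is in hand. The coefficients of the equation for $u$ are smooth on the existence interval; since $u_0\geq 0$ with $u_0\not\equiv 0$, the strong maximum principle gives $u>0$ in $\Omega\times(0,T_{\max})$. For $v$, the nonnegativity of $u$ reduces its equation to the supersolution inequality $v_t\geq \Delta v-v$, so comparison with $2\delta_0 e^{-t}$ yields $v(x,t)\geq 2\delta_0 e^{-t}>0$ throughout $\bar\Omega\times[0,T_{\max})$; in particular, this propagated lower bound keeps $\chi/v$ pointwise finite all the way up to $T_{\max}$.

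To justify the extensibility criterion, argue by contradiction: assume $T_{\max}<\infty$ but $\|u(\cdot,t)\|_{L^\infty(\Omega)}+\|v(\cdot,t)\|_{W^{1,\infty}(\Omega)}$ remains bounded as $t\nearrow T_{\max}$. The inherited lower bound $v\geq 2\delta_0 e^{-T_{\max}}$ keeps $\chi/v$ uniformly bounded, so treating the two equations as linear parabolic problems with bounded coefficients and applying parabolic Schauder (or $L^p$-maximal regularity) estimates upgrades $(u,v)$ to a bounded family in $C^{2+\alpha,1+\alpha/2}$ near $T_{\max}$. The short-time construction can then be restarted from some $t_0<T_{\max}$ close enough to $T_{\max}$, producing a strictly longer existence interval and contradicting maximality.

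The main obstacle is purely one of bookkeeping: one must track how the smallness of $T$ depends simultaneously on the radius $R$ and on the lower bound $\delta_0$, so that the contraction property, the persistence of $\tilde v\geq\delta_0$ under $\Phi$, and the $W^{1,\infty}$-regularity of $v$ (which is what makes the drift $\chi\nabla v/v$ an admissible coefficient) all hold at the same time. Beyond this, the singular sensitivity is entirely neutralized by the uniform lower bound on $v$, and the rest reduces to classical linear parabolic theory combined with maximum principles.
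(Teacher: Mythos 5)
The paper itself omits the proof of Lemma \ref{lemma70}, pointing instead to the standard literature (\cite{Cie72,Wang76,Wang72,Zheng0,Zhaoffgg}), and your Banach fixed-point scheme on $\mathcal{S}_{T,R}$ — linearizing the two equations successively, using the strict positivity of $v_0$ to propagate a pointwise lower bound on $v$ and hence defuse the singular factor $\chi/v$, then applying the strong maximum principle and a comparison with $2\delta_0 e^{-t}$ for positivity, and finally arguing the extensibility criterion by contradiction — is exactly the argument those references use. Your proposal is correct and essentially the same as the intended proof.
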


\begin{lemma}(\cite{LankeitsdddLankeit796})\label{lemma630}
Let $T\in(0,\infty]$, let $y\in C^1((0, T )) \cap C^0([0, T ))$, $B > 0, A > 0$ and the nonnegative function $h \in C^0([0, T ))$ satisfy
%
%
\begin{equation}\label{x1.73142hjkl}
\begin{array}{ll}
\displaystyle{
 y'(t)+Ay(t)\leq h(t)~~~\mbox{and}~~~\int_{(t-1)_+}^{t}h(s)ds\leq B ~~\mbox{for a.e.}~~t\in(0,T)}.\\
\end{array}
\end{equation}
Then
%
%
$$y(t)\leq y_0+\frac{B}{1-e^{-A}}~~\mbox{for all}~~t\in(0,T).$$
\end{lemma}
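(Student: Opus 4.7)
The plan is to use the standard integrating-factor trick together with a partition of the time axis into unit subintervals running backwards from $t$, so that the integrated-forcing hypothesis becomes directly applicable.

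First, I would multiply the differential inequality $y'(t)+Ay(t)\leq h(t)$ by $e^{At}$ to obtain $\frac{d}{dt}\bigl(e^{At}y(t)\bigr)\leq e^{At}h(t)$. Integrating from $0$ to $t$ and rearranging yields the pointwise bound
\begin{equation*}
y(t)\leq e^{-At}y(0)+\int_{0}^{t}e^{-A(t-s)}h(s)\,ds,
\end{equation*}
so it suffices to estimate the convolution on the right by $B/(1-e^{-A})$, uniformly in $t\in(0,T)$.

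Next, I would slice the interval $[0,t]$ backwards from $t$ into the pieces $I_k:=\bigl[(t-k-1)_+,(t-k)_+\bigr]$ for $k=0,1,2,\ldots$ For $s\in I_k$ one has $t-s\geq k$, hence $e^{-A(t-s)}\leq e^{-Ak}$. Applying the hypothesis $\int_{(s_0-1)_+}^{s_0}h\leq B$ at $s_0=(t-k)_+$, and observing that $(t-k-1)_+\geq(s_0-1)_+$, each slice satisfies $\int_{I_k}h(s)\,ds\leq B$. Summing the resulting geometric series gives
\begin{equation*}
\int_{0}^{t}e^{-A(t-s)}h(s)\,ds\leq\sum_{k=0}^{\infty}e^{-Ak}\int_{I_k}h(s)\,ds\leq B\sum_{k=0}^{\infty}e^{-Ak}=\frac{B}{1-e^{-A}},
\end{equation*}
and combining with $e^{-At}y(0)\leq y_0$ yields the claim.

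The main (and essentially only) subtlety will be handling the endpoint effect near $s=0$, i.e.\ making sure the truncation $(t-k-1)_+$ versus $t-k-1$ does not spoil the bound on $\int_{I_k}h$; this is precisely why the hypothesis is formulated with $(t-1)_+$ rather than $t-1$, and once that is noted the argument above goes through without further complication. No compactness or regularity beyond $y\in C^1\cap C^0$ and $h\in C^0$ is needed, which is why the lemma applies in the generality stated.
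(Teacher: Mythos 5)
The paper does not prove this lemma itself but cites it from \cite{LankeitsdddLankeit796}; your argument (variation of constants, then slicing the convolution integral backwards into unit windows and summing a geometric series) is the standard proof found there and in the related literature, and it is correct. Your bookkeeping with the truncations is in fact exact: one checks directly that $(t-k-1)_+=\bigl((t-k)_+-1\bigr)_+$ for every $k$, so each slice $I_k$ is precisely an admissible window for the hypothesis, and the ``a.e.'' qualifier causes no trouble because $h$ and $y'$ are continuous. One small caveat worth flagging: the final estimate $e^{-At}y(0)\le y_0$ tacitly requires $y_0=y(0)\ge 0$, a hypothesis the statement omits but which is clearly intended and holds in all applications in this paper; without it the lemma as literally stated is false (take $h\equiv 0$ and $y(t)=y_0e^{-At}$ with $y_0<-B/(1-e^{-A})$).
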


\section{The boundedness  and classical  solution of \dref{111.ssderrfff1}}

\subsection{Some well-known result about  \dref{111.ssderrfff1}}
In order to discuss the boundedness  and classical  solution of \dref{111.ssderrfff1}, firstly,  we will
recall some well-known result about the  solutions to \dref{111.ssderrfff1}.

\begin{lemma}\label{wsdelemma45}
Under the assumptions in
 Lemma  \ref{lemma70}, we derive that
there exists a positive constant 
$\lambda$ independent of $a$ and $\mu$
such that the solution of \dref{111.ssderrfff1} satisfies
%
%
\begin{equation}
\int_{\Omega}{u}+\int_{\Omega} {v^2 }+\int_{\Omega}|\nabla {v }|^2 \leq \lambda~~\mbox{for all}~~ t\in(0, T_{max})
\label{cz2.5ghju48cfg924ghyuji}
\end{equation}
and
\begin{equation}
\int_{(t-1)_{+}}^{t}\int_{\Omega}[|\nabla {v }|^2+u^2 + |\Delta {v }|^2]\leq \lambda~~\mbox{for all}~~ t\in(0, T_{max}).
\label{223455cz2.5ghjddffgggu48cfg924ghyuji}
\end{equation}
\end{lemma}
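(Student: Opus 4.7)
The plan is to obtain each of the asserted estimates by carefully testing the two equations of \dref{111.ssderrfff1} against appropriate factors, exploiting the quadratic absorption $-\mu u^2$ in the $u$–equation to compensate for the linear production term in the $v$–equation, and then invoking the ODE Lemma \ref{lemma630} to convert differential inequalities with mildly controlled right–hand sides into the desired uniform bounds.

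First, I would integrate the $u$–equation over $\Omega$ and use the homogeneous Neumann boundary conditions together with the fact that $\nabla\cdot(u\nabla v/v)$ is a divergence term. This yields
\begin{equation*}
\frac{d}{dt}\int_{\Omega}u+\mu\int_{\Omega}u^{2}=a\int_{\Omega}u.
\end{equation*}
Applying Cauchy–Schwarz in the form $(\int_{\Omega}u)^{2}\leq|\Omega|\int_{\Omega}u^{2}$ turns this into a logistic ODE for $y(t):=\int_{\Omega}u$, so that $\int_{\Omega}u$ stays bounded in terms of $\|u_{0}\|_{L^{1}(\Omega)}$, $a$, $\mu$, $|\Omega|$. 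Re-integrating the identity on $(t-1)_{+},t)$ then gives $\int_{(t-1)_{+}}^{t}\int_{\Omega}u^{2}\leq C$, which is a crucial space-time $L^2$ control that will be used repeatedly.

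Next, to bound $\int_{\Omega}v^{2}$ I would multiply the second equation by $v$ and integrate by parts to obtain
\begin{equation*}
\frac{1}{2}\frac{d}{dt}\int_{\Omega}v^{2}+\int_{\Omega}|\nabla v|^{2}+\int_{\Omega}v^{2}=\int_{\Omega}uv\leq\frac{1}{2}\int_{\Omega}v^{2}+\frac{1}{2}\int_{\Omega}u^{2}.
\end{equation*}
Dropping the nonnegative term $\int_{\Omega}|\nabla v|^{2}$, this fits the hypotheses of Lemma \ref{lemma630} with $y=\int_{\Omega}v^{2}$, $A=1$ and $h=\int_{\Omega}u^{2}$, whose time integral is already controlled, yielding uniform boundedness of $\int_{\Omega}v^{2}$. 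Keeping the gradient term and integrating the energy identity on $(t-1)_+,t)$ then also furnishes the bound $\int_{(t-1)_{+}}^{t}\int_{\Omega}|\nabla v|^{2}\leq C$.

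Finally, to control $\int_{\Omega}|\nabla v|^{2}$ pointwise in time and $\int\!\!\int|\Delta v|^{2}$, I would test the second equation against $-\Delta v$ and integrate by parts, using $\frac{\partial v}{\partial\nu}=0$, to arrive at
\begin{equation*}
\frac{1}{2}\frac{d}{dt}\int_{\Omega}|\nabla v|^{2}+\int_{\Omega}|\Delta v|^{2}+\int_{\Omega}|\nabla v|^{2}=-\int_{\Omega}u\Delta v\leq\frac{1}{2}\int_{\Omega}u^{2}+\frac{1}{2}\int_{\Omega}|\Delta v|^{2}.
\end{equation*}
After absorbing the Laplacian term this yields a differential inequality of the form $\frac{d}{dt}\int_{\Omega}|\nabla v|^{2}+2\int_{\Omega}|\nabla v|^{2}+\int_{\Omega}|\Delta v|^{2}\leq\int_{\Omega}u^{2}$, to which Lemma \ref{lemma630} again applies with the already-established locally uniform $L^{2}$ bound on $u$; integrating on $(t-1)_{+},t)$ delivers the remaining space-time $|\Delta v|^{2}$ bound.

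I do not expect significant obstacles here: the argument is a standard energy cascade in which the quadratic logistic dissipation provides the bootstrap starting point, and each subsequent bound depends only on quantities already controlled in the previous step. The only mildly delicate point is to be consistent about how the Young-type splittings are tuned so that every dissipative term retains a positive coefficient, and to verify that the right-hand sides in Lemma \ref{lemma630} genuinely have uniformly bounded time integrals over intervals of length one — both of which are ensured automatically once $\int_{\Omega}u(\cdot,t)$ is bounded via the logistic mechanism.
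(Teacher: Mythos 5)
Your proposal is correct and follows essentially the same energy cascade as the paper: integrate the $u$--equation to get the $L^1$ bound and the space--time $L^2$ bound via the logistic absorption and Cauchy--Schwarz, then test the $v$--equation against $v$ and against $-\Delta v$ together with Lemma~\ref{lemma630}. The only (immaterial) difference is the order of the last two tests: the paper tests with $-\Delta v$ first, extracting $\int_{(t-1)_+}^t\int_\Omega(|\nabla v|^2+|\Delta v|^2)$ in one go, and then with $v$, whereas you test with $v$ first and obtain the space--time $|\nabla v|^2$ bound from that step instead.
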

\begin{proof}
From integration of the first equation in \dref{111.ssderrfff1} we obtain
\begin{equation}
\begin{array}{rl}
\disp\frac{d}{dt}\disp\int_{\Omega}u =&\disp{\int_\Omega   (au -\mu u ^2) ~~\mbox{for all}~~ t\in(0, T_{max}),}\\
\end{array}
\label{111811cz2.5114114}
\end{equation}
which 
 implies that
\begin{equation}
\begin{array}{rl}
\disp\frac{d}{dt}\disp\int_{\Omega}u \leq&\disp{a\int_\Omega u -\frac{\mu}{|\Omega|} \left(\int_\Omega u \right)^2
 ~~\mbox{for all}~~ t\in(0, T_{max})}\\
\end{array}
\label{1ssdd11811cz2.5114114}
\end{equation}
by using  the Cauchy-Schwarz inequality.
Hence, employing  the Young inequality to  \dref{1ssdd11811cz2.5114114} and  integrating the resulted inequality   in time, we derive  that there exists
 a positive constant $C_1$ 
 such that
\begin{equation}
\int_{\Omega}{u } \leq C_1~~\mbox{for all}~~ t\in(0, T_{max}).
\label{s3344cz2.5ghju48cfg924ghyuji}
\end{equation}
For  each $t\in(0, T_{max})$, integration with respect to time 
results in
\begin{equation}
\begin{array}{rl}
&\disp{\int_{(t-1)_{+}}^{t}\int_{\Omega}u^2 \leq C_2}\\
\end{array}
\label{bnmbncz2.5ghhdderrffjuyuivvbssdddeennihjj}
\end{equation}
by \dref{s3344cz2.5ghju48cfg924ghyuji}.
Now,
multiplying the second  equation of \dref{111.ssderrfff1}
  by $-\Delta v $, integrating over $\Omega$ and using  the Young inequality, 
 we get
$$
\begin{array}{rl}
\disp\frac{1}{{2}}\frac{d}{dt}\|\nabla v \|^{{2}}_{L^{{2}}(\Omega)}+\int_{\Omega}|\Delta v |^2+\int_{\Omega}|\nabla v |^2
=&\disp{-\int_\Omega  u \Delta v  }\\
\leq&\disp{\frac{1}{2}\int_\Omega  u ^2+\frac{1}{2}\int_\Omega  |\Delta v |^2 ~~\mbox{for all}~~ t\in(0, T_{max}),}\\
\end{array}
$$
from   Lemma \ref{lemma630}  we infer that
\begin{equation}
\int_{\Omega}|\nabla {v }|^2 \leq C_3~~\mbox{for all}~~ t\in(0, T_{max})
\label{cz2.5ghju48cfg92dffff4ghdffffhhyuji}
\end{equation}
and
\begin{equation}
\begin{array}{rl}
&\disp{\int_{(t-1)_{+}}^{t}\int_{\Omega}[|\nabla {v }|^2+ |\Delta {v }|^2]\leq C_4}\\
\end{array}
\label{bnmddfgghghbncz2.5ghhjufffffyuivvbssdddeennihjj}
\end{equation}
by \dref{bnmbncz2.5ghhdderrffjuyuivvbssdddeennihjj}.
Next, testing the second  equation of \dref{111.ssderrfff1}
  by $v $, we conclude that
\begin{equation}
\int_{\Omega}v ^2 \leq C_5~~\mbox{for all}~~ t\in(0, T_{max}).
\label{cz2.5ghju48cfg924ghdffffggggffhhyuji}
\end{equation}
by applying \dref{bnmbncz2.5ghhdderrffjuyuivvbssdddeennihjj}.
Now, collecting \dref{s3344cz2.5ghju48cfg924ghyuji}--\dref{cz2.5ghju48cfg924ghdffffggggffhhyuji} yields to \dref{cz2.5ghju48cfg924ghyuji}
and \dref{223455cz2.5ghjddffgggu48cfg924ghyuji}.
\end{proof}

\subsection{A
lower bounded estimate of $v$}

In order to deal with {\bf the singular sensitivity}, in this subsection,
we will derive a
lower bounded estimate of $v$.
To achieve this, 
we  transform this
into time-independent lower bound for
$\int_{\Omega}u^{-\alpha}$ for some $\alpha>0$. Indeed,
we firstly
conclude  a bound on
$\int_{\Omega}u^{p}v^{q}$ with  some
negative exponents $p$ and $q$.
\begin{lemma}\label{lemmsssa45ddfffff630223} Let $\Omega\subset\mathbb{R}^N(N\geq1)$ be a smooth bounded  domain.
    Let $(u,v)$ be a solution to \dref{111.ssderrfff1} on $(0,T_{max})$.  Then
for all $\tilde{p}, \tilde{q} \in \mathbb{R}$, on $(0, T_{max})$ we have
\begin{equation}
\begin{array}{rl}
&\disp{\frac{d}{dt}\int_{\Omega}u^{\tilde{p}}v^{\tilde{q}}}
\\
=&\disp{-\tilde{p}(\tilde{p}-1)\int_{\Omega}u^{\tilde{p}-2}v^{\tilde{q}}|\nabla u|^2+[\tilde{p}(\tilde{p}-1)\chi-2\tilde{p}\tilde{q}]\int_{\Omega}u^{\tilde{p}-1}v^{\tilde{q}-1}\nabla u\cdot\nabla v}
\\
&+\disp{[-\tilde{q}(\tilde{q}-1)+\tilde{p}\tilde{q}\chi ]\int_{\Omega}u^{\tilde{p}}v^{\tilde{q}-2}|\nabla v|^2+[a\tilde{p}-\tilde{q}]\int_{\Omega}u^{\tilde{p}}v^{\tilde{q}}}
\\
&\disp{-\mu \tilde{p} \int_{\Omega}u^{\tilde{p}+1}v^{\tilde{q}}+ \tilde{q}\int_{\Omega}u^{\tilde{p}+1}v^{\tilde{q}-1}~~ \mbox{for all}~~  t\in(0,T_{max}).}
\\
\end{array}
\label{jjcz2.511ssdddkkkkdfgg41ssfffffsddggggddfgddfggggddffff14}
\end{equation}
\end{lemma}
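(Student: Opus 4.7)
The plan is a direct computation: differentiate the integrand under the integral sign, substitute the two PDEs from \dref{111.ssderrfff1}, move all Laplacian terms to gradient-gradient form via integration by parts (using the Neumann boundary conditions $\partial_\nu u=\partial_\nu v=0$, and noting that smoothness and positivity of $u,v$ on $(0,T_{max})$ legitimize the manipulation of the weights $u^{\tilde p-2}$, $v^{\tilde q-2}$, etc.), and finally collect coefficients. The crucial observation is that the test functions $\tilde p u^{\tilde p-1} v^{\tilde q}$ (against the $u$-equation) and $\tilde q u^{\tilde p} v^{\tilde q-1}$ (against the $v$-equation) are compatible with the Neumann boundary data since they inherit vanishing normal derivatives from $u$ and $v$ on $\partial\Omega$ through the chain rule.

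First I would write
\[
\frac{d}{dt}\int_{\Omega}u^{\tilde p}v^{\tilde q}=\int_{\Omega}\tilde p\,u^{\tilde p-1}v^{\tilde q}u_t+\int_{\Omega}\tilde q\,u^{\tilde p}v^{\tilde q-1}v_t
\]
and substitute $u_t=\Delta u-\chi\nabla\cdot(\tfrac{u}{v}\nabla v)+au-\mu u^2$ and $v_t=\Delta v-v+u$. The reaction parts $au-\mu u^2$ and $-v+u$ immediately give the pure-integral terms $[a\tilde p-\tilde q]\int u^{\tilde p}v^{\tilde q}-\mu\tilde p\int u^{\tilde p+1}v^{\tilde q}+\tilde q\int u^{\tilde p+1}v^{\tilde q-1}$ that appear on the last two lines of \dref{jjcz2.511ssdddkkkkdfgg41ssfffffsddggggddfgddfggggddffff14}.

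Next I would handle the three diffusion/taxis terms by integration by parts. For $\int\tilde p u^{\tilde p-1}v^{\tilde q}\Delta u$, using $\nabla(u^{\tilde p-1}v^{\tilde q})=(\tilde p-1)u^{\tilde p-2}v^{\tilde q}\nabla u+\tilde q\,u^{\tilde p-1}v^{\tilde q-1}\nabla v$, one obtains
\[
-\tilde p(\tilde p-1)\int_{\Omega}u^{\tilde p-2}v^{\tilde q}|\nabla u|^2-\tilde p\tilde q\int_{\Omega}u^{\tilde p-1}v^{\tilde q-1}\nabla u\cdot\nabla v.
\]
For $-\chi\tilde p\int u^{\tilde p-1}v^{\tilde q}\nabla\cdot(\tfrac{u}{v}\nabla v)$, integrating by parts and then expanding $\nabla(u^{\tilde p-1}v^{\tilde q})$ yields
\[
\chi\tilde p(\tilde p-1)\int_{\Omega}u^{\tilde p-1}v^{\tilde q-1}\nabla u\cdot\nabla v+\chi\tilde p\tilde q\int_{\Omega}u^{\tilde p}v^{\tilde q-2}|\nabla v|^2.
\]
For $\tilde q\int u^{\tilde p}v^{\tilde q-1}\Delta v$, analogously,
\[
-\tilde p\tilde q\int_{\Omega}u^{\tilde p-1}v^{\tilde q-1}\nabla u\cdot\nabla v-\tilde q(\tilde q-1)\int_{\Omega}u^{\tilde p}v^{\tilde q-2}|\nabla v|^2.
\]
Adding these three contributions reproduces exactly the gradient terms on the first two lines of \dref{jjcz2.511ssdddkkkkdfgg41ssfffffsddggggddfgddfggggddffff14}: the $|\nabla u|^2$ coefficient is $-\tilde p(\tilde p-1)$; the mixed term coefficient is $-2\tilde p\tilde q+\chi\tilde p(\tilde p-1)$; the $|\nabla v|^2$ coefficient is $\chi\tilde p\tilde q-\tilde q(\tilde q-1)$.

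I do not anticipate a serious obstacle here; the result is purely algebraic once integration by parts is applied. The only points that require a moment of care are (a) justifying the boundary terms vanish (they do, since $\partial_\nu u=\partial_\nu v=0$ and $\nabla v$ is tangential to $\partial\Omega$, so $\tfrac{u}{v}\nabla v\cdot\nu=0$ as well), and (b) verifying the signs and combinatorial coefficients of the mixed $\nabla u\cdot\nabla v$ and $|\nabla v|^2$ integrals—the main bookkeeping risk is conflating the $-2\tilde p\tilde q$ arising from the two Laplacian terms with the $+\chi\tilde p(\tilde p-1)$ arising from the taxis term. Collecting everything gives precisely \dref{jjcz2.511ssdddkkkkdfgg41ssfffffsddggggddfgddfggggddffff14}.
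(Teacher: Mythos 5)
Your proof is correct and follows the same route the paper takes: the paper simply cites Lemma~2.3 of \cite{Winkler44455678887} for the analogous direct computation, while you spell it out in full. The integration by parts, the expansion of the gradients, and the collected coefficients all check out; in particular the mixed-term coefficient $-\tilde p\tilde q+\chi\tilde p(\tilde p-1)-\tilde p\tilde q=\chi\tilde p(\tilde p-1)-2\tilde p\tilde q$ and the $|\nabla v|^2$ coefficient $\chi\tilde p\tilde q-\tilde q(\tilde q-1)$ agree with \dref{jjcz2.511ssdddkkkkdfgg41ssfffffsddggggddfgddfggggddffff14}. (One small aside: what kills the boundary integrals is $\partial_\nu u=0$ and $\tfrac{u}{v}\nabla v\cdot\nu=0$ on $\partial\Omega$; there is no need for the test functions themselves to have vanishing normal derivative — but your concluding point~(a) already states the right reason, so this does not affect the argument.)
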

\begin{proof}
Proceeding analogously to Lemma 2.3 of \cite{Winkler44455678887}, we can prove the desired identity.
\end{proof}

With the help of Lemma \ref{lemmsssa45ddfffff630223}, by using the idea of Lemma 3.2 of \cite{Zhaoffgg},  we can estimate $\int_{\Omega}u^{-p}v^{-q}$ (for some negative exponents $p$ and $q$) in the following  format:
%

\begin{lemma}\label{lemmssssdddsa45ddfffff630223} Let $\Omega\subset\mathbb{R}^N(N\geq1)$ be a smooth bounded  domain
 and $(u,v)$ be a solution to \dref{111.ssderrfff1} on $(0,T_{max})$.  Then for $a \in \mathbb{R}$, there exist $p\in(0,1),C$ and
 $q > q_{1,+}:=\frac{p+1}{2}(\sqrt{1+p\chi^2}-1)$ such that
\begin{equation}
\begin{array}{rl}
&\disp{\frac{d}{dt}\int_{\Omega}u^{-p}v^{-q}\leq (q-ap)\int_{\Omega}u^{-p}v^{-q}+C~~ \mbox{for all}~~  t\in(0,T_{max}).}
\\
\end{array}
\label{caaaz2.511ssdddkkkkdfgsdfsdffssdddfffffffg41sddggggddfgddfggggddffff14}
\end{equation}
\end{lemma}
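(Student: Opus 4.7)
The plan is to start from the identity of Lemma~\ref{lemmsssa45ddfffff630223} with $\tilde p=-p$ and $\tilde q=-q$, where $p\in(0,1)$ and $q>q_{1,+}$ are parameters to be chosen. After this substitution one obtains
\begin{equation*}
\frac{d}{dt}\int_\Omega u^{-p}v^{-q}=\mathcal{Q}(t)+(q-ap)\int_\Omega u^{-p}v^{-q}+\mu p\int_\Omega u^{1-p}v^{-q}-q\int_\Omega u^{1-p}v^{-q-1},
\end{equation*}
where $\mathcal{Q}(t)$ collects the three gradient contributions
\begin{align*}
\mathcal{Q}(t) &= -p(p+1)\int_\Omega u^{-p-2}v^{-q}|\nabla u|^2+[p(p+1)\chi-2pq]\int_\Omega u^{-p-1}v^{-q-1}\nabla u\cdot\nabla v \\
&\quad +[pq\chi-q(q+1)]\int_\Omega u^{-p}v^{-q-2}|\nabla v|^2.
\end{align*}

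The first step is to show $\mathcal{Q}(t)\leq 0$. I would rewrite the integrand as a pointwise quadratic form in the vectors $X:=u^{-p/2-1}v^{-q/2}\nabla u$ and $Y:=u^{-p/2}v^{-q/2-1}\nabla v$, namely $-p(p+1)|X|^2+[p(p+1)\chi-2pq]X\cdot Y+[pq\chi-q(q+1)]|Y|^2$. Negative semi-definiteness of the associated $2\times 2$ matrix reduces to the condition $q+1\geq p\chi$ (automatic for small $p$) together with the discriminant inequality $4p(p+1)[q(q+1)-pq\chi]\geq[p(p+1)\chi-2pq]^2$; a direct expansion shows this is equivalent to $4q^2+4(p+1)q-p(p+1)^2\chi^2\geq 0$, whose positive root is precisely $q_{1,+}=\frac{p+1}{2}(\sqrt{1+p\chi^2}-1)$. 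Hence choosing $q>q_{1,+}$ lets me drop $\mathcal{Q}(t)$.

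The second step disposes of the last two integrals via the weighted Young inequality $a^\theta b^{1-\theta}\leq\theta a+(1-\theta)b$, applied with $a=u^{1-p}v^{-q-1}$, $b=u^{1-p}$ and $\theta=q/(q+1)\in(0,1)$. This yields pointwise
\begin{equation*}
u^{1-p}v^{-q}\leq\frac{q}{q+1}u^{1-p}v^{-q-1}+\frac{1}{q+1}u^{1-p},
\end{equation*}
so that after multiplication by $\mu p$ and combination with the $-q$-term,
\begin{equation*}
\mu p\int_\Omega u^{1-p}v^{-q}-q\int_\Omega u^{1-p}v^{-q-1}\leq\Big(\frac{\mu p q}{q+1}-q\Big)\int_\Omega u^{1-p}v^{-q-1}+\frac{\mu p}{q+1}\int_\Omega u^{1-p}.
\end{equation*}
Picking $p$ small enough so that $\mu p\leq q+1$ makes the first bracket non-positive, and H\"older's inequality together with the $L^1$-bound of Lemma~\ref{wsdelemma45} gives $\int_\Omega u^{1-p}\leq|\Omega|^p(\int_\Omega u)^{1-p}\leq C$.

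The main obstacle is the simultaneous constraint on $p$ and $q$: $p$ must be small enough to ensure both the spectral condition $q>q_{1,+}(p)$ and the Young-absorption condition $\mu p\leq q+1$. This is not an issue because $q_{1,+}(p)\to 0$ as $p\to 0^+$; so a valid triple is obtained by first fixing any small $q>0$ and then shrinking $p$ until $q>q_{1,+}(p)$ and $\mu p<q+1$ hold together. Assembling these two steps yields
\begin{equation*}
\frac{d}{dt}\int_\Omega u^{-p}v^{-q}\leq(q-ap)\int_\Omega u^{-p}v^{-q}+C,
\end{equation*}
which is precisely \dref{caaaz2.511ssdddkkkkdfgsdfsdffssdddfffffffg41sddggggddfgddfggggddffff14}.
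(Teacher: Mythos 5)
Your proof is correct and follows essentially the paper's own route: both substitute $\tilde p=-p$, $\tilde q=-q$ into the identity of Lemma~\ref{lemmsssa45ddfffff630223}, both neutralize the gradient terms under $q>q_{1,+}(p)$, and both absorb the remaining $\mu p\int_\Omega u^{1-p}v^{-q}$ by Young's inequality together with the $L^1$-bound on $u$. Your quadratic-form/discriminant treatment of the gradient block is a repackaging of the paper's manipulation, which instead cancels the cross term against the $|\nabla u|^2$-term by Young and checks that the residual $|\nabla v|^2$-coefficient is negative; expanding your determinant condition reproduces exactly the paper's $\tilde f(q)=-4q^2-4(p+1)q+p(p+1)^2\chi^2<0$, so the two viewpoints coincide. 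The one substantive difference is in the final Young step: you use the unweighted AM--GM with $\theta=q/(q+1)$ and must therefore impose $\mu p\le q+1$ to absorb the $\int_\Omega u^{1-p}v^{-q-1}$-term, whereas the paper applies the scaled Young inequality $A^\theta B^{1-\theta}\le\theta\lambda A+(1-\theta)\lambda^{-\theta/(1-\theta)}B$ with $\lambda=(q+1)/(\mu p)$, which hits the coefficient $q$ exactly and hence yields \dref{caaaz2.511ssdddkkkkdfgsdfsdffssdddfffffffg41sddggggddfgddfggggddffff14} for \emph{every} $p\in(0,1)$ and $q>q_{1,+}(p)$, with no condition involving $\mu$. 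Since this lemma only asserts existence of one admissible triple $(p,q,C)$, and since you explain how to pick $p$ small so that both $q>q_{1,+}(p)$ and $\mu p<q+1$ hold, your argument is complete; it is still worth noting that the paper's $\mu$-free version is the one actually needed in Lemma~\ref{lemmssssdddssssssa45ddfffff630223}, where $p$ is forced into the $\chi$- and $a$-dependent window $(p_{\tilde g,-},p_{\tilde g,+})\cap(0,1)$ and $q\in(q_{1,+}(p),ap)$, constraints that for large $\mu$ may not leave room for the extra requirement $\mu p\le q+1$.
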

\begin{proof}
Firstly, choosing $\tilde{p}:=-p>0$ and $\tilde{q}:=-q>0$ and in Lemma \ref{lemmsssa45ddfffff630223}, we obtain
\begin{equation}
\begin{array}{rl}
&\disp{\frac{d}{dt}\int_{\Omega}u^{-p}v^{-q}}
\\
=&\disp{-p(p+1)\int_{\Omega}u^{-p-2}v^{-q}|\nabla u|^2+[p(p+1)\chi-2pq]\int_{\Omega}u^{-p-1}v^{-q-1}\nabla u\cdot\nabla v}
\\
&+\disp{(pq\chi-q(q+1))\int_{\Omega}u^{-p}v^{-q-2}|\nabla v|^2+(q-ap)\int_{\Omega}u^{-p}v^{-q}}
\\
&+\disp{\mu p \int_{\Omega}u^{-p+1}v^{-q}- q\int_{\Omega}u^{-p+1}v^{-q-1}~~ \mbox{for all}~~  t\in(0,T_{max}).}
\\
\end{array}
\label{cz2.511ssdddkkkkdfgg41ssfffffsddggggddfgddfggggddffff14}
\end{equation}
Next, by the  Young inequality, the second term  of \dref{cz2.511ssdddkkkkdfgg41ssfffffsddggggddfgddfggggddffff14} can be estimated by
\begin{equation}
\begin{array}{rl}
&\disp{[p(p+1)\chi-2pq]\int_{\Omega}u^{-p-1}v^{-q-1}\nabla u\cdot\nabla v}
\\
\leq&\disp{p(p+1)\int_{\Omega}u^{-p-2}v^{-q}|\nabla u|^2+\frac{p[(p+1)\chi-2q]^2}{4(p+1)}\int_{\Omega}u^{-p}v^{-q-2}|\nabla v|^2}
\\
\end{array}
\label{cz2.511ssdddkkkkdfgg41sddggggddfgddfggggddffff14}
\end{equation}
for all $t\in(0,T_{max}).$
Inserting \dref{cz2.511ssdddkkkkdfgg41sddggggddfgddfggggddffff14} into \dref{cz2.511ssdddkkkkdfgg41ssfffffsddggggddfgddfggggddffff14} implies that
\begin{equation}
\begin{array}{rl}
&\disp{\frac{d}{dt}\int_{\Omega}u^{-p}v^{-q}}
\\
\leq&\disp{\{\frac{p[(p+1)\chi-2q]^2}{4(p+1)}+pq\chi-q(q+1)\}\int_{\Omega}u^{-p}v^{-q-2}|\nabla v|^2}
\\
&+\disp{[q-ap]\int_{\Omega}u^{-p}v^{-q}+\mu p \int_{\Omega}u^{-p+1}v^{-q}- q\int_{\Omega}u^{-p+1}v^{-q-1}~~ \mbox{for all}~~  t\in(0,T_{max}).}
\\
\end{array}
\label{cz2.511ssdddkkkkdfgg41sddggggddfssdddgddfggggddffff14}
\end{equation}
 Now, denote
$$4(p+1)f(p;q,\chi):=\tilde{f}(q)=-4q^2-4(p+1)q+p(p+1)^2\chi^2,$$
where $$f(p;q,\chi):=\frac{p[(p+1)\chi-2q]^2}{4(p+1)}+pq\chi-q(q+1).$$
Therefore, $$f(p;q,\chi) < 0$$  by the Vi\`{e}te formula and $q > q_{1,+}=\frac{p+1}{2}(\sqrt{1+p\chi^2}-1).$
Combine with \dref{cz2.511ssdddkkkkdfgg41sddggggddfssdddgddfggggddffff14} to get
\begin{equation}
\begin{array}{rl}
&\disp{\frac{d}{dt}\int_{\Omega}u^{-p}v^{-q}\leq (q-ap)\int_{\Omega}u^{-p}v^{-q}+\mu p \int_{\Omega}u^{-p+1}v^{-q}- q\int_{\Omega}u^{-p+1}v^{-q-1}~~ \mbox{for all}~~  t\in(0,T_{max}).}
\\
\end{array}
\label{cz2.511ssdddkkkkdfgsdffffg41sddggggddfgddfggggddffff14}
\end{equation}
Take $p\in(0,1)$, in view of the Young inequality and using \dref{cz2.5ghju48cfg924ghyuji}, we conclude that
there exists some $C_1 > 0$, such that
\begin{equation}
\begin{array}{rl}
\mu p \disp\int_{\Omega}u^{-p+1}v^{-q}\leq &\disp{q\int_{\Omega}u^{-p+1}v^{-q-1}+(\frac{\mu p}{q+1})^{q+1} \int_{\Omega}u^{(1-p)}}
\\
\leq &\disp{q\int_{\Omega}u^{-p+1}v^{-q-1}+C_1,}
\\
\end{array}
\label{cz2.511ssdddkkkddfffkdfgsdffffg41sddggggddfgddfggggddffff14}
\end{equation}
from which \dref{caaaz2.511ssdddkkkkdfgsdfsdffssdddfffffffg41sddggggddfgddfggggddffff14} immediately follows by \dref{cz2.511ssdddkkkkdfgsdffffg41sddggggddfgddfggggddffff14} and some basic  calculation.
\end{proof}
Thanks
to the H\"{o}lder inequality and  $L^p$-$L^q$ for the Neumann heat semigroup, Lemma \ref{lemma70} directly entails a
uniform lower bound for  $v$ in $\Omega$ with $a$ satisfying \dref{111.ssderrfff1}.
\begin{lemma}\label{lemmssssdddssssssa45ddfffff630223} Let $\Omega\subset\mathbb{R}^N(N\geq1)$ be a smooth bounded  domain
 and $(u,v)$ be a solution to \dref{111.ssderrfff1} on $(0,T_{max})$.
 If $a$
satisfies \dref{x1sssss.731426kkklll677gg},
then there exists a positive constant $\eta_0$ independent of $\mu$ such that
\begin{equation}\label{eqxhddfffgfgghhj45xx12112}
v(x,t)\geq \eta_0~~\mbox{for all}~~(x,t)\in\Omega\times(0,T_{max}).
\end{equation}
\end{lemma}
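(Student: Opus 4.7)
The plan is to convert the uniform bound on a weighted integral with negative exponents, coming from Lemma \ref{lemmssssdddsa45ddfffff630223}, into the desired uniform pointwise lower bound for $v$, via an intermediate uniform lower bound for $\|u(\cdot,t)\|_{L^1(\Omega)}$.

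First I would use \dref{x1sssss.731426kkklll677gg} to select admissible parameters. An elementary calculation shows that $\min_{p\in(0,1]}\frac{q_{1,+}(p)}{p}=\min_{p\in(0,1]}\frac{p+1}{2p}(\sqrt{1+p\chi^2}-1)$ equals $\chi^2/4$ for $0<\chi\leq 2$ (realized as $p\to 0^+$) and equals $\chi-1$ for $\chi>2$ (attained at $p^*=1-2/\chi$); hence \dref{x1sssss.731426kkklll677gg} furnishes $p\in(0,1)$ and $q$ with $q_{1,+}(p)<q<ap$. Setting $y(t):=\int_\Omega u^{-p}v^{-q}$, Lemma \ref{lemmssssdddsa45ddfffff630223} then gives the ODI $y'(t)+(ap-q)y(t)\leq C$. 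Fixing any $t_0\in(0,T_{max})$, the strong maximum principle applied to the first equation of \dref{111.ssderrfff1} yields $u(\cdot,t_0)>0$ on $\bar{\Omega}$, hence $y(t_0)<\infty$, and ODE comparison produces a uniform bound $y(t)\leq K$ for $t\in[t_0,T_{max})$.

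Next I would translate $y\leq K$ into $\int_\Omega u\geq m>0$ and then into the claimed pointwise bound on $v$. Lemma \ref{wsdelemma45} together with Sobolev embedding furnishes a uniform bound on $\|v(\cdot,t)\|_{L^r(\Omega)}$ for some $r>q$ depending on $N$ (any finite $r$ if $N\leq 2$, and $r=\frac{2N}{N-2}$ if $N\geq 3$). A second application of Lemma \ref{lemmssssdddsa45ddfffff630223} with slightly enlarged exponents $(p\sigma,q\sigma)$ for some $\sigma>1$ close to $1$---admissibility persists by continuity---gives $\int_\Omega u^{-p\sigma}v^{-q\sigma}\leq K_\sigma$, and H\"older's inequality with conjugate exponents $\sigma,\sigma'$ chosen so that $q\sigma'\leq r$ yields
\[
\int_\Omega u^{-p} \;=\; \int_\Omega u^{-p}v^{-q}\cdot v^{q} \;\leq\; K_\sigma^{1/\sigma}\,\|v\|_{L^{q\sigma'}(\Omega)}^{q} \;\leq\; K'.
\]
The elementary H\"older identity $|\Omega|\leq(\int_\Omega u)^{p/(p+1)}(\int_\Omega u^{-p})^{1/(p+1)}$ then delivers $\int_\Omega u(\cdot,t)\geq m:=|\Omega|^{(p+1)/p}(K')^{-1/p}>0$ for $t\in[t_0,T_{max})$. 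Finally, the variation-of-constants representation $v(t)=e^{-(t-t_0)}e^{(t-t_0)\Delta}v(t_0)+\int_{t_0}^{t}e^{-(t-s)}e^{(t-s)\Delta}u(s)\,ds$, combined with the uniform positivity $G_N(x,y,\tau)\geq c_0>0$ of the Neumann heat kernel for $(x,y,\tau)\in\bar{\Omega}\times\bar{\Omega}\times[\frac{1}{2},1]$, yields upon restricting the time integral to $s\in[t-1,t-\frac{1}{2}]$ the pointwise bound $v(x,t)\geq\frac{c_0 m}{2e}=:\eta_1>0$ for $t\geq t_0+1$; continuity of $v$ and the strong maximum principle cover the compact remainder $(0,t_0+1]$ with some $\eta_2>0$, so that $\eta_0:=\min\{\eta_1,\eta_2\}$ does the job.

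The main technical obstacle is the parameter juggling in the middle step: one must simultaneously arrange $p\sigma<1$, $q\sigma>q_{1,+}(p\sigma)$, $a\cdot p\sigma>q\sigma$, and $q\sigma'\leq r$, which forces $p$ to be small. In the borderline regime $\chi>2$ with $a$ just above $\chi-1$, where the optimizer $p^*=1-2/\chi$ is bounded away from zero, these constraints become tight; a robust alternative would be to bootstrap a uniform pointwise upper bound $\|v\|_\infty\leq M$ via the Neumann semigroup estimates of Lemma \ref{llssdrffmmggnnccvvccvvkkkkgghhkkllvvlemma45630} together with the $L^2$-integrability of $u$ on sliding intervals from Lemma \ref{wsdelemma45}, which allows the direct estimate $\int_\Omega u^{-p}\leq M^q K$ and bypasses the second application of Lemma \ref{lemmssssdddsa45ddfffff630223}.
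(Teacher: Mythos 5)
Your overall roadmap matches the paper's: bound $\int_\Omega u^{-p}v^{-q}$ by picking $p\in(0,1)$ and $q$ with $q_{1,+}(p)<q<ap$ (your computation of $\min_{p\in(0,1]}q_{1,+}(p)/p=\chi^2/4$ for $0<\chi\le 2$ and $=\chi-1$ for $\chi>2$ is a correct alternative to the paper's Vi\`ete argument), then convert this into a uniform bound on a negative moment of $u$, hence into a positive lower bound for $\int_\Omega u$, and finally into a pointwise lower bound for $v$ via the Neumann heat kernel's strict positivity. The first and last steps are fine.

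The middle step, however, has a genuine gap. You attempt to bound $\int_\Omega u^{-p}$ by H\"older with the full exponent $p$, splitting off $v^{q}$ and using $v\in L^{q\sigma'}$. But the only $L^r$-integrability of $v$ you can invoke from Lemma \ref{wsdelemma45} and Sobolev is $r=\frac{2N}{N-2}$ for $N\ge 3$, which is bounded above by $2$ as $N\to\infty$. Requiring $q\sigma'\le r$ then forces $\sigma$ well away from $1$, and the enlarged pair $(p\sigma,q\sigma)$ may drop out of the admissible cone $\{q'>q_{1,+}(p')\}$; this is not merely ``tight'' but actually infeasible. Concretely, take $\chi=3$, $a=2.01$, $N=100$: then $p^*=1/3$, $q_{1,+}(p^*)=2/3$, $ap^*=0.67$, so $q\approx0.668$; with $r\approx2.04$ one is forced into $\sigma\gtrsim1.49$, whence $p\sigma\approx0.50$ and $q_{1,+}(p\sigma)\approx1.00>0.99\approx q\sigma$, violating admissibility. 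Your ``robust alternative'' of first establishing $\|v(\cdot,t)\|_{L^\infty}\le M$ from the space-time $L^2$ bound on $u$ also does not go through: in the semigroup estimate for $\int_{t-1}^t e^{(t-s)(\Delta-1)}u(s)\,ds$ the factor $(t-s)^{-N/4}$ is not square-integrable in $s$ unless $N=1$, so this bootstrap fails already for $N\ge2$. The paper circumvents both obstructions by not aiming at $\int_\Omega u^{-p}$; instead it H\"olders down to $\int_\Omega u^{-\alpha}$ for a freely chosen small $\alpha\in(0,\min\{\tfrac{pN}{qN-2q+N},p\})$, which puts only the small exponent $\tfrac{q\alpha}{p-\alpha}<\tfrac{N}{N-2}$ on $v$; this $L^r$-norm of $v$ is then controlled directly from the $L^1$-bound on $u$ via the Neumann semigroup estimate, with no Sobolev embedding and no re-application of Lemma \ref{lemmssssdddsa45ddfffff630223} at enlarged exponents. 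Since any negative moment $\int_\Omega u^{-\alpha}$ suffices to force $\int_\Omega u$ away from zero (by H\"older with exponents $\tfrac{\alpha+1}{\alpha}$ and $\alpha+1$), the extra flexibility in $\alpha$ is exactly what is needed to make the step work for all $N$ and all admissible $\chi$.
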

\begin{proof}
Let $\delta_1 = \frac{1}{2}\inf_{x\in\Omega} v_0(x).$
In view of Lemma \ref{lemma70}, 
there exists $t_0\in (0, T_{max})$, such that 
%
%
\begin{equation}\label{eqxhssdddddfffgfgghhj45xx1sddd2112}
v(x, t) > \delta_1~~\mbox{for all}~~(x,t)\in\Omega\times(0,T_{0}].
\end{equation}
So we only need to prove \dref{eqxhddfffgfgghhj45xx12112}
for $t\in (t_0, T_{max})$. In fact, let
%
%
$$\tilde{g}(p):=\frac{p+1}{2}(\sqrt{1+p\chi^2}-1)-ap,~~p>0.$$
Due to $a$ satisfy \dref{x1sssss.731426kkklll677gg},
 we derive that $$(0, 1) \cap (p_{\tilde{g},-}, p_{\tilde{g},+})\neq\varnothing$$
by using the Vi\`{e}te formula again,
where $$p_{\tilde{g},\pm}=\frac{2a^2+2a-\chi^2\pm 2a\sqrt{(1+a)^2-\chi^2}}{\chi^2}.$$
%
%
%
%
%
%
%
Taking $\alpha\in (0, \min\{\frac{pN}{qN-2q+N},p\})$, then
\begin{equation}-\frac{N}{2}(1-\frac{p-\alpha}{q\alpha})>-1
\label{cz2.511ssdddkkkddfffkdfgsdfffsddffffg41sddggggddfgddfggggddffff14}
\end{equation}
and
\begin{equation}
\begin{array}{rl}
&\disp{\int_{\Omega}u^{-\alpha}\leq \left(\int_{\Omega}u^{-p}v^{-q}\right)^{\frac{\alpha}{p}}\left(\int_{\Omega}v^{\frac{q\alpha}{p-\alpha}}\right)^{\frac{p-\alpha}{p}}}
\\
\end{array}
\label{cz2.511ssdddkkddfffgggkkdfgsdffffg41sddggggddfgddfggggddffff14}
\end{equation}
by the H\"{o}der inequality. Integrating \dref{caaaz2.511ssdddkkkkdfgsdfsdffssdddfffffffg41sddggggddfgddfggggddffff14} from $t_0$ to $t$ yields
\begin{equation}
\begin{array}{rl}
&\disp{\int_{\Omega}u^{-p}v^{-q}\leq e^{(q-ap)(t-t_0)}\int_{\Omega}u^{-p}(x,t_0)v^{-q}(x,t_0)+C_1}
\\
\end{array}
\label{caaaz2.511ssdddkkkkdfgsdfsdfffhhhffffffg41sddggggddfgddfggggddffff14}
\end{equation}
with some $C_1 > 0$.

On the other hand, by Lemma \ref{llssdrffmmggnnccvvccvvkkkkgghhkkllvvlemma45630}  with \dref{cz2.5ghju48cfg924ghyuji} and notation $ \bar{u}=\bar{u}(s) :=
\frac{1}{|\Omega|}\int_{\Omega}u$ (for any $s\geq0$), we have
\begin{equation}\begin{array}{rl}
&\| v(\cdot,t)\|_{L^\frac{q\alpha}{p-\alpha}(\Omega)}\\
\leq&\disp \| e^{t(\Delta-1)}v_0\|_{L^\frac{q\alpha}{p-\alpha}(\Omega)} +\int_{0}^{t}\| e^{(t-s)(\Delta-1)}(u(\cdot,s)-\bar{u})\|_{L^\frac{q\alpha}{p-\alpha}(\Omega)} ds+\int_{0}^{t}\| e^{(t-s)(\Delta-1)}\bar{u}\|_{L^\frac{q\alpha}{p-\alpha}(\Omega)} ds\\
\leq&\disp c_1\| v_0\|_{L^\infty(\Omega)} +c_1\int_{0}^{t}(1+(t-s)^{-\frac{N}{2}(1-\frac{p-\alpha}{q\alpha})})e^{-(\lambda_1+1)(t-s) }
\|u(\cdot,s)-\bar{u}\|_{L^1(\Omega)} ds\\
&\disp+ c_1\lambda\int_{0}^{t}e^{-(\lambda_1+1)(t-s) } ds\\
\leq&\disp  C_2~~~\mbox{for all}~~t\in(t_0,T_{max})\\
\end{array}
\label{2ssdddd23455cz2.ddff5ghjsdddddffgggu48cfg9sdddd24ghyuji}
\end{equation}
with some $C_2> 0,$ where $c_1$ is the same as Lemma \ref{llssdrffmmggnnccvvccvvkkkkgghhkkllvvlemma45630}.
Here we have used the fact that
$$\int_{0}^{t}[1+(t-s)^{-\frac{N}{2}(1-\frac{p-\alpha}{q\alpha})}]e^{-(\lambda_1+1)(t-s) } ds\leq \int_{0}^{\infty}[1+(t-s)^{-\frac{N}{2}(1-\frac{p-\alpha}{q\alpha})}]e^{-(\lambda_1+1)(t-s) }  ds<+\infty,$$
due to \dref{cz2.511ssdddkkkddfffkdfgsdfffsddffffg41sddggggddfgddfggggddffff14}.

Combine \dref{cz2.511ssdddkkddfffgggkkdfgsdffffg41sddggggddfgddfggggddffff14}--\dref{2ssdddd23455cz2.ddff5ghjsdddddffgggu48cfg9sdddd24ghyuji} to know there exists  $C_3 > 0$ such that
\begin{equation}
\begin{array}{rl}
&\disp{\int_{\Omega}u^{-\alpha}(x,t)\leq C_3~~~\mbox{for all}~~t\in(t_0,T_{max})}
\\
\end{array}
\label{cz2.511ssdddkkddfffgggdffffkkdfgsdffffg41sddggggddfgddfggggddffff14}
\end{equation}
and hence
\begin{equation}
\begin{array}{rl}
\disp\int_{\Omega}u(x,t)\geq&\disp{ |\Omega|^{\frac{\alpha+1}{\alpha}}\left(\int_{\Omega}u^{-\alpha}\right)^{-\frac{1}{\alpha}}}\\
\geq&\disp{|\Omega|^{\frac{\alpha+1}{\alpha}}C_3^{-\frac{1}{\alpha}}}
\\
:=&\disp{\delta_2~~~\mbox{for all}~~t\in(t_0,T_{max})}
\\
\end{array}
\label{cz2.511ssdddkddffffkddfffgggdffffkkdfgsdffffg41sddggggddfgddfggggddffff14}
\end{equation}
by the H\"{o}lder inequality.
The representation of $v$ as
\begin{equation}\begin{array}{rl}
v(\cdot,t)=&\disp e^{t(\Delta-1)}v_0 +\int_{0}^{t}e^{(t-s)(\Delta-1)}u(\cdot,s) ds~~~\mbox{for all}~~t\in(t_0,T_{max})\\
\end{array}
\label{ffff1.ssddghhhjddfffjffsjkkksderrfff1}
\end{equation}
makes it possible to apply well-known estimates for the Neumann heat-semigroup $\{e^{t\Delta}\}_{t\geq0}$, which provides a positive constant $\delta_3$
such that
\begin{equation}\begin{array}{rl}
v(\cdot,t)=&\disp e^{t(\Delta-1)}v_0 +\int_{0}^{t}e^{(t-s)(\Delta-1)}u(\cdot,s) ds\\
\geq&\disp \int_{0}^{t}\frac{}{(4\pi(t-s))^{\frac{N}{2}}}e^{-[(t-s)+\frac{(\mbox{diam}\Omega)^2}{4(t-s)}]}\int_{\Omega}u(x,s) ds\\
\geq&\disp \delta_2\int_{0}^{t_0}\frac{}{(4\pi \sigma)^{\frac{N}{2}}}e^{-[\sigma+\frac{(\mbox{diam}\Omega)^2}{4\sigma}]}\\
:=&\disp{\delta_3~~~\mbox{for all}~~t\in(t_0,T_{max})~~~\mbox{and}~~x\in \Omega}
\\
\end{array}
\label{1.ssddghhhjddfffjffsjkkksderrfddfffff1}
\end{equation}
by using \dref{ffff1.ssddghhhjddfffjffsjkkksderrfff1} and \dref{cz2.511ssdddkddffffkddfffgggdffffkkdfgsdffffg41sddggggddfgddfggggddffff14}.
Let $\eta_0 = \min\{\delta_1, \delta_3\}$ to complete the proof.
\end{proof}



In order to obtain a bound for $u$ with respect to the norm in $L^\infty(\Omega)$, we need to
%
 obtain an $L^p(\Omega)$-estimate for $u$, for some $p > \frac{N}{2}$.
%
To this end, 
we  transform this
into time-independent upper
bound for
$\int_{\Omega}u^{\beta}$ for some $\beta>0$. In fact,
we firstly
conclude  a bound on
$\int_{\Omega}u^{p}v^{-q}$ with  some
positive  exponents $p$ and $q$.

\begin{lemma}\label{lemkddddddkkkmsssa45630223} Let $\Omega\subset\mathbb{R}^N(N\geq2)$ be a smooth bounded  domain.
Assume that $\chi<1.$ Let $a$ satisfy  \dref{x1sssss.731426kkklll677gg} and $(u,v)$ be a solution to \dref{111.ssderrfff1} on $(0,T_{max})$. Then
for all $p\in(1,\frac{1}{\chi^2})$, for each $q\in (q_{2,-}(p), q_{2,+}(p))$, one can find $C>0$  independent of $\mu$  such that
\begin{equation}
\begin{array}{rl}
&\disp{\int_{\Omega}u^{p}v^{-q}\leq \frac{C}{\mu}~~~\mbox{for all}~~t\in(0,T_{max}),}
\\
\end{array}
\label{cz2.511ssdddkkddfddfffffffgggdffffkkdfgsdffffg41sddggggddfgddfggggddffff14}
\end{equation}
  where
\begin{equation}
q_{2,\pm}(p):=q_{2,\pm}=\frac{p-1}{2}(1\pm\sqrt{1-p\chi^2}).\label{czddfff2.511ssdddkkddfddfffffffgggdffffkkdfgsdffffg41sddggggddfgddfggggddffff14}
\end{equation}
\end{lemma}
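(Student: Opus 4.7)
The plan is to use the identity of Lemma~\ref{lemmsssa45ddfffff630223} with the substitution $\tilde p=p$, $\tilde q=-q$, absorb the gradient coupling term via Young's inequality against the already-available $|\nabla u|^2$-dissipation, verify that the remaining coefficient of the $|\nabla v|^2$-integral is non-positive precisely for $q\in(q_{2,-},q_{2,+})$, and finally leverage the pointwise lower bound $v\geq \eta_0$ from Lemma~\ref{lemmssssdddssssssa45ddfffff630223} together with the logistic sink $-\mu p\int_\Omega u^{p+1}v^{-q}$ to control the remaining linear term.

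After the substitution, the cross term $[p(p-1)\chi+2pq]\int_\Omega u^{p-1}v^{-q-1}\nabla u\cdot\nabla v$ is estimated by Young with the weight tuned to exactly cancel the dissipation $-p(p-1)\int_\Omega u^{p-2}v^{-q}|\nabla u|^2$. After this cancellation, the net coefficient of the $|\nabla v|^2$-integral reduces to
\[
\frac{[p(p-1)\chi+2pq]^2}{4p(p-1)}-q(q+1)-pq\chi \;=\; \frac{4q^2-4(p-1)q+p(p-1)^2\chi^2}{4(p-1)}.
\]
The quadratic in the numerator has discriminant $16(p-1)^2(1-p\chi^2)$, which is positive exactly when $p<1/\chi^2$, and its roots are the announced $q_{2,\pm}$. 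Hence for $q\in(q_{2,-},q_{2,+})$ the coefficient is non-positive and the whole $|\nabla v|^2$-integral may be dropped.

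Setting $y(t):=\int_\Omega u^p v^{-q}$, what survives is
\[
y'(t) \leq (ap+q)\,y(t) - \mu p \int_\Omega u^{p+1}v^{-q} - q\int_\Omega u^{p+1}v^{-q-1},
\]
and the last term, being non-positive, is discarded. Applying Young's inequality with exponents $(p+1)/p$ and $p+1$ to the factorization $u^p v^{-q}=(u^{p+1}v^{-q})^{p/(p+1)}\cdot v^{-q/(p+1)}$ gives, for every $\delta>0$,
\[
u^p v^{-q} \leq \frac{p\delta}{p+1}\,u^{p+1}v^{-q}+\frac{\delta^{-p}}{p+1}\,v^{-q}.
\]
Integrating and using $v\geq \eta_0$, I then choose $\delta=\mu(p+1)/(ap+q)$ so that $(ap+q)\cdot p\delta/(p+1)=\mu p$, which turns the differential inequality into
\[
y'(t)+\alpha\,y(t)\leq \frac{C_1}{\mu^p}
\]
with $\alpha=ap+q$ and $C_1=C_1(a,p,q,\chi,\eta_0,|\Omega|)>0$. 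Gronwall then yields $y(t)\leq y(0)e^{-\alpha t}+C_1/(\alpha\mu^p)$, and since $p>1$ this majorant is bounded above by $C/\mu$ with $C$ independent of $\mu$ in the $\mu$-range relevant to the subsequent Alikakos--Moser iteration.

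The main obstacle is the discriminant condition in the first step: the splitting of the cross term between the $|\nabla u|^2$- and $|\nabla v|^2$-dissipations is governed by a single quadratic in $q$, and its interior window $(q_{2,-},q_{2,+})$ is non-empty exactly when $p\chi^2<1$; no alternative choice of Young weight can loosen this requirement. This is why the smallness assumption on $\chi$ from Theorem~\ref{theorem3} is indispensable precisely at this point.
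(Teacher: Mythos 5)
Your route is essentially the paper's: you invoke the identity of Lemma~\ref{lemmsssa45ddfffff630223} with $\tilde p=p$, $\tilde q=-q$, absorb the cross term into the $|\nabla u|^2$-dissipation by Young, and check the sign of the remaining $|\nabla v|^2$-coefficient through the quadratic in $q$ whose roots are $q_{2,\pm}$. Your expression $4q^2-4(p-1)q+p(p-1)^2\chi^2$ and the discriminant $16(p-1)^2(1-p\chi^2)$ are correct (the paper's displayed $\tilde g$ carries a stray sign, and your version is the right one); the conclusion that the coefficient is non-positive precisely for $q\in(q_{2,-},q_{2,+})$ agrees with the paper.

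There is, however, a gap in your closing step. After dropping the non-positive terms you have
\[
y'(t)\leq (ap+q)\,y(t)-\mu p\int_\Omega u^{p+1}v^{-q}.
\]
Your Young inequality, after integration and use of $v\geq\eta_0$, gives
\[
(ap+q)\,y(t)\leq (ap+q)\,\frac{p\delta}{p+1}\int_\Omega u^{p+1}v^{-q}+\frac{C_1}{\mu^p},
\]
and with $\delta=\mu(p+1)/(ap+q)$ the first coefficient is exactly $\mu p$. Substituting back, the logistic sink cancels $(ap+q)y$ in full and you are left with only $y'(t)\leq C_1/\mu^p$. This does not produce $y'+\alpha y\leq C_1/\mu^p$ with $\alpha=ap+q$ as you assert: integrating $y'\leq C_1/\mu^p$ gives $y(t)\leq y(0)+C_1 t/\mu^p$, which grows without bound in $t$, so no uniform estimate follows. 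You must manufacture the decay term. The paper does so by first writing $y'+y\leq(ap+q+1)y-\mu p\int u^{p+1}v^{-q}$ and then running Young against the \emph{larger} prefactor $(ap+q+1)$; equivalently, you can keep your structure but take a strictly smaller weight, e.g.\ $\delta=\mu(p+1)/(2(ap+q))$, so that the dissipation over-absorbs $(ap+q)y$ and a genuine term $-(ap+q)y$ survives on the right. Either fix yields the intended ODE inequality, which can then be integrated via Lemma~\ref{lemma630}. (Your final caveat about the $\mu$-range is also apt: the integrated bound contains the $\mu$-independent contribution $e^{-t}y(0)$, so the literal $C/\mu$ claim requires interpretation; downstream only the $\mu$-free bound $\int_\Omega u^p v^{-q}\leq C$ is actually invoked in Lemma~\ref{lemkkkkmssskklllla45630223}.)
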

\begin{proof}
Firstly, choosing $\tilde{p}:=p>1$ and $\tilde{q}:=-q>0$ and in Lemma \dref{lemmsssa45ddfffff630223}, we obtain
\begin{equation}
\begin{array}{rl}
&\disp{\frac{d}{dt}\int_{\Omega}u^{p}v^{-q}}
\\
=&\disp{-p(p-1)\int_{\Omega}u^{p-2}v^{-q}|\nabla u|^2+[2pq+p(p-1)\chi]\int_{\Omega}u^{p-1}v^{-q-1}\nabla u\cdot\nabla v}
\\
&-\disp{(q(q+1)+pq\chi)\int_{\Omega}u^{p}v^{-q-2}|\nabla v|^2+(q+ap)\int_{\Omega}u^{p}v^{-q}}
\\
&-\disp{\mu p \int_{\Omega}u^{p+1}v^{-q}- q\int_{\Omega}u^{p+1}v^{-q-1}~~ \mbox{for all}~~  t\in(0,T_{max}),}
\\
\end{array}
\label{ssssscz2.511ssdddkkkkdfgg41ssfffffsddggggddfgddfggggddffff14}
\end{equation}
where by the  Young inequality,
\begin{equation}
\begin{array}{rl}
&\disp{[p(p-1)\chi+2pq]\int_{\Omega}u^{p-1}v^{-q-1}\nabla u\cdot\nabla v}
\\
\leq&\disp{p(p-1)\int_{\Omega}u^{p-2}v^{-q}|\nabla u|^2+\frac{p[(p-1)\chi+2q]^2}{4(p-1)}\int_{\Omega}u^{p}v^{-q-2}|\nabla v|^2}
\\
\end{array}
\label{1111cz2.511ssdddkkkkdfgg41sddggggddfgddfggggddffff14}
\end{equation}
for all $t\in(0,T_{max}).$
Inserting \dref{1111cz2.511ssdddkkkkdfgg41sddggggddfgddfggggddffff14} into \dref{ssssscz2.511ssdddkkkkdfgg41ssfffffsddggggddfgddfggggddffff14} implies that
\begin{equation}
\begin{array}{rl}
&\disp{\frac{d}{dt}\int_{\Omega}u^{p}v^{-q}}
\\
\leq&\disp{\{\frac{p[(p-1)\chi+2q]^2}{4(p-1)}-q(q+1)-pq\chi\}\int_{\Omega}u^{p}v^{-q-2}|\nabla v|^2}
\\
&+\disp{[q+ap]\int_{\Omega}u^{p}v^{-q}-\mu p \int_{\Omega}u^{p+1}v^{-q}- q\int_{\Omega}u^{p+1}v^{-q-1}~~ \mbox{for all}~~  t\in(0,T_{max}).}
\\
\end{array}
\label{cz2.511ssdddkkkkdfgg41sdgghdfffhdggggddfssdddgddfggggddffff14}
\end{equation}
 Denote
$$g(p;q,\chi):=\frac{p[(p-1)\chi+2q]^2}{4(p-1)}-pq\chi-q(q+1),$$
and rewrite it as the quadric expression in $q$ that
$$4(p-1)g(p;q,\chi):=\tilde{g}(q)=-4q^2+4(p-1)q-p(p-1)^2\chi^2.$$
According to $\Delta_{\tilde{g},q}:= 16(p - 1)^2(1 - p\chi^2) > 0$, our assumption $q \in (q_{2,-},q_{2,+})$ ensures that
$$g(p;q,\chi)< 0,$$ where
$q_{2,\pm}$ is given by \dref{czddfff2.511ssdddkkddfddfffffffgggdffffkkdfgsdffffg41sddggggddfgddfggggddffff14}.
Combine with \dref{cz2.511ssdddkkkkdfgg41sdgghdfffhdggggddfssdddgddfggggddffff14} to get
\begin{equation}
\begin{array}{rl}
&\disp{\frac{d}{dt}\int_{\Omega}u^{p}v^{-q}\leq (q+ap)\int_{\Omega}u^{p}v^{-q}-\mu p \int_{\Omega}u^{p+1}v^{-q}- q\int_{\Omega}u^{p+1}v^{-q-1}~~ \mbox{for all}~~  t\in(0,T_{max}).}
\\
\end{array}
\label{cz2.511ssdddkkkkdfgsdffffg41sddggggddfgddfggggddffff14}
\end{equation}
We now invoke the Young inequality  and use \dref{eqxhddfffgfgghhj45xx12112} in estimating
\begin{equation}
\begin{array}{rl}
(q+ap+1)\disp\int_{\Omega}u^{p}v^{-q} \leq &\disp{\mu p\int_{\Omega}u^{p+1}v^{-q}+\frac{1}{p+1}(\mu (q+1))^{-p}(q+ap+1)^{p+1} \int_{\Omega}v^{-q}}
\\
\leq &\disp{\mu p\int_{\Omega}u^{p+1}v^{-q}+\frac{1}{p+1}(\mu (q+1))^{-p}(q+ap+1)^{p+1} \eta_0^{-q}|\Omega|,}
\\
\end{array}
\label{cz2.511ssdddkkkddfffkdfgsdffffg41sddggggddfgddfggggddffff14}
\end{equation}
which together with \dref{cz2.511ssdddkkkkdfgsdffffg41sddggggddfgddfggggddffff14} implies
\begin{equation}
\begin{array}{rl}
&\disp{\frac{d}{dt}\int_{\Omega}u^{p}v^{-q}+\int_{\Omega}u^{p}v^{-q}\leq \frac{1}{p+1}(\mu (q+1))^{-p}(q+ap+1)^{p+1} \eta_0^{-q}|\Omega|~~ \mbox{for all}~~  t\in(0,T_{max}).}
\\
\end{array}
\label{cz2.511ssdddkkkssddddkdfgsdffffg41sddggggddfgddfggggddffff14}
\end{equation}
For all $t\in(0,T_{max}),$ integrating this between $0$ and $t$, taking into account Lemma \ref{lemma630} we obtain
\begin{equation}
\begin{array}{rl}
\disp\int_{\Omega}u^{p}(\cdot,t)v^{-q}(\cdot,t)\leq &\disp{e^{-t}\int_{\Omega}u^{p}_0v^{-q}_0 +
\frac{1}{p+1}(\mu (q+1))^{-p}(q+ap+1)^{p+1} \eta_0^{-q}|\Omega|(1-e^{-t}).}
\\
\end{array}
\label{cz2.511ssdddkkkssddddkdfgsdffffg41skkkkddggggddfgddfggggddffff14}
\end{equation}
Therefore,
 \dref{cz2.511ssdddkkddfddfffffffgggdffffkkdfgsdffffg41sddggggddfgddfggggddffff14} holds due to $p>1.$
%
%
\end{proof}

\begin{corollary}\label{corollaryssssssa45ddfffff630223}
Assume that $0<\chi<\sqrt{\frac{2}{N}}$ with $N\geq2$. Then there exist $\kappa>\frac{N}{2}$,
$q_0\in (0,\frac{N}{2})$
and $C_0>0$ independent of $\mu$
  such that
\begin{equation}
\begin{array}{rl}
&\disp{\int_{\Omega}u^{\kappa}v^{-q_0}\leq \frac{C_0}{\mu}~~~\mbox{for all}~~t\in(0,T_{max}).}
\\
\end{array}
\label{cz2.511ssdddkkddfddfssdddddffffffgggdffffkkdfgsdffffg41sddggggddfgddfggggddffff14}
\end{equation}
\end{corollary}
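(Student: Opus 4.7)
The plan is to derive the corollary as a direct specialization of Lemma~\ref{lemkddddddkkkmsssa45630223} with the free exponents $(p,q)$ there set equal to the desired pair $(\kappa, q_0)$. The smallness assumption $0<\chi<\sqrt{2/N}$ is tailored exactly so that such a pair can be located while simultaneously meeting four constraints: (i) $\kappa\in(1,1/\chi^2)$ and (ii) $q_0\in(q_{2,-}(\kappa),\, q_{2,+}(\kappa))$ imposed by Lemma~\ref{lemkddddddkkkmsssa45630223}, together with the two extra requirements (iii) $\kappa>N/2$ and (iv) $q_0\in(0,N/2)$ demanded by the corollary. Note that the lemma's running assumption $\chi<1$ is automatic, since $\sqrt{2/N}\leq 1$ for $N\geq 2$.

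For the first variable I would square the hypothesis to obtain $\chi^2<2/N$, equivalently $1/\chi^2>N/2$; the interval $(N/2,\,1/\chi^2)$ is therefore nonempty, and any $\kappa$ drawn from it automatically meets both (i) and (iii) (since $\kappa>N/2\geq 1$). For the second variable, $\kappa>1$ together with $\kappa\chi^2<1$ yield
\[
0<q_{2,-}(\kappa)=\tfrac{\kappa-1}{2}\bigl(1-\sqrt{1-\kappa\chi^2}\bigr)<\tfrac{\kappa-1}{2}<q_{2,+}(\kappa).
\]
Picking $\kappa$ close enough to $N/2$ forces $\tfrac{\kappa-1}{2}$ to lie near $\tfrac{N-2}{4}$, which is strictly below $N/2$ for every $N\geq 2$; hence $q_{2,-}(\kappa)<N/2$ as well, and the overlap $(q_{2,-}(\kappa),\, q_{2,+}(\kappa))\cap(0,\,N/2)$ is nonempty, supplying a legitimate choice of $q_0$ satisfying (ii) and (iv) at once.

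The only substantive step is the feasibility check above, and it makes transparent why the restriction $\chi<\sqrt{2/N}$ cannot be relaxed: at the borderline value $\chi=\sqrt{2/N}$ the admissible interval for $\kappa$ collapses. With $(\kappa,q_0)$ in hand, invoking Lemma~\ref{lemkddddddkkkmsssa45630223} at once produces a $\mu$-independent constant $C_0>0$ such that $\int_\Omega u^\kappa v^{-q_0}\leq C_0/\mu$ on $(0,T_{max})$, which is exactly \eqref{cz2.511ssdddkkddfddfssdddddffffffgggdffffkkdfgsdffffg41sddggggddfgddfggggddffff14}. No additional estimate is needed beyond what has already been proved.
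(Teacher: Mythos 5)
Your proposal is correct and follows essentially the same route as the paper: specialize Lemma~\ref{lemkddddddkkkmsssa45630223} by choosing $\kappa\in(N/2,1/\chi^2)$ (nonempty precisely because $\chi^2<2/N$) and then $q_0$ in the resulting admissible interval intersected with $(0,N/2)$. Your feasibility check for $q_0$ is in fact a bit more careful than the paper's (which asserts the inclusion $(q_{2,-},q_{2,+})\subset(0,N/2)$ without noting that this requires $\kappa$ near $N/2$), but the substance and the invocation of the lemma are identical.
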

\begin{proof} Firstly, we derive that  $\frac{1}{\chi^2}>\frac{N}{2}$
by $N\geq2$ and $0<\chi<\sqrt{\frac{2}{N}}$. Therefore, we may choose $p:=\kappa>\frac{N}{2}$ such that $p\in(1,\frac{1}{\chi^2})$  and $q_0\in (\frac{p-1}{2}(1-\sqrt{1-p\chi^2}), \frac{p-1}{2}(1+\sqrt{1-p\chi^2}))\subset(0,\frac{N}{2})$.
The claimed inequality \dref{cz2.511ssdddkkddfddfssdddddffffffgggdffffkkdfgsdffffg41sddggggddfgddfggggddffff14} thus results from  Lemma \ref{lemkddddddkkkmsssa45630223}.
%
%
\end{proof}

\subsection{The proof of Theorem \ref{theorem3}}


The goal of this subsection is to establish a bound for $u$ with respect to the norm in $L^\infty(\Omega)$ in quantitative dependence on
a supposedly known pointwise lower bound for $v$. Indeed,
using that boundedness properties of $u$ (see  Corollary \ref{corollaryssssssa45ddfffff630223} ) and a pointwise lower bound for $v$ (see Lemma \ref{lemmssssdddssssssa45ddfffff630223}) imply boundedness properties of $\|u(\cdot,t)\|_{L^{\infty}(\Omega)}$ and $\|\nabla v(\cdot,t)\|_{L^{\infty}(\Omega)}$
by the variation-of-constants formula.

\begin{lemma}\label{lemkkkkmssskklllla45630223}Let $\Omega\subset\mathbb{R}^N(N\geq1)$ be a smooth bounded  domain.
 Let $a$ satisfy  \dref{x1sssss.731426kkklll677gg} and $(u,v)$ be a solution to \dref{111.ssderrfff1} on $(0,T_{max})$.
 If $\chi$
satisfies \dref{x1sssss.7314kkkkkk26kkklll677gg},
then
\begin{equation}
\begin{array}{rl}
&\disp{\sup_{t\in(0,T_{max})}(\|u(\cdot,t)\|_{L^{\infty}(\Omega)}+\|\nabla v(\cdot,t)\|_{L^{\infty}(\Omega)})<+\infty.}
\\
\end{array}
\label{cz2.511dfddfffgg41sddfgggddffff14}
\end{equation}
\end{lemma}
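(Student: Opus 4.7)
The plan is to upgrade the weighted integrability of $u$ furnished by Corollary \ref{corollaryssssssa45ddfffff630223} into a genuine $L^\kappa$ bound with some $\kappa>N/2$, to propagate this through the semigroup estimates of Lemma \ref{llssdrffmmggnnccvvccvvkkkkgghhkkllvvlemma45630} into higher regularity for $v$, and then to run a Moser--Alikakos iteration on the $u$-equation.

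The first step is the delicate one. Since the lower bound $v\ge\eta_0$ provided by Lemma \ref{lemmssssdddssssssa45ddfffff630223} only yields $v^{-q_0}\le\eta_0^{-q_0}$, the weighted estimate $\int_\Omega u^\kappa v^{-q_0}\le C/\mu$ cannot be de-weighted directly, so I would bootstrap through the variation-of-constants representation
\[
v(\cdot,t)=e^{t(\Delta-1)}v_0+\int_0^t e^{(t-s)(\Delta-1)}u(\cdot,s)\,ds.
\]
Starting from $\|u(\cdot,t)\|_{L^1(\Omega)}\le\lambda$ (Lemma \ref{wsdelemma45}) and applying Lemma \ref{llssdrffmmggnnccvvccvvkkkkgghhkkllvvlemma45630}(i), one first obtains $v\in L^{p_1}(\Omega)$ for some $p_1>1$; the H\"older split $u^q=(u^\kappa v^{-q_0})^{q/\kappa}\cdot v^{q_0 q/\kappa}$ then produces an unweighted $L^q$-bound on $u$ for a correspondingly chosen $q$, which when re-inserted into the formula improves the integrability exponent of $v$. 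Iterating this alternation finitely many times produces $\|v(\cdot,t)\|_{L^\infty(\Omega)}\le C$ uniformly in $t$, and hence $\sup_t\int_\Omega u^\kappa(\cdot,t)\le C$ with $\kappa>N/2$. (For $N=1$ no bootstrap is needed, since the $L^1$-integrability already exceeds the threshold $N/2=1/2$.)

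With $\sup_t\|u(\cdot,t)\|_{L^\kappa(\Omega)}<\infty$ in hand, Lemma \ref{llssdrffmmggnnccvvccvvkkkkgghhkkllvvlemma45630}(ii) applied to the representation of $v$ gives $\|\nabla v(\cdot,t)\|_{L^r(\Omega)}\le C$ for every finite $r$. Testing the first equation of \dref{111.ssderrfff1} against $u^{p-1}$, using $v\ge\eta_0$ and Young's inequality to absorb the chemotactic cross-term $(p-1)\chi\int u^{p-1}\nabla u\cdot\nabla v/v$ into the diffusion $(p-1)\int u^{p-2}|\nabla u|^2$ and the logistic dissipation $-\mu\int u^{p+1}$, one derives an ODI of the form $\tfrac{d}{dt}\int_\Omega u^p+\int_\Omega u^p\le C_p$, to which Lemma \ref{lemma630} applies to yield $\sup_t\int_\Omega u^p<\infty$ for every $p\in[1,\infty)$. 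A standard Moser--Alikakos iteration (Lemma A.1 of \cite{Tao794}) then lifts these estimates to $\sup_t\|u(\cdot,t)\|_{L^\infty(\Omega)}<\infty$, and a final application of Lemma \ref{llssdrffmmggnnccvvccvvkkkkgghhkkllvvlemma45630}(ii) with $q=\infty$ (whose $\tau^{-1/2}$ singularity is integrable) produces the remaining bound $\sup_t\|\nabla v(\cdot,t)\|_{L^\infty(\Omega)}<\infty$. The hard part will be the bootstrap in the first step, since the scheme must be designed so as to avoid circularity between the $u$- and $v$-estimates.
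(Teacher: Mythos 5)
Your overall scheme (de-weight the estimate from Corollary \ref{corollaryssssssa45ddfffff630223}, propagate it to $\nabla v$, then iterate to $L^\infty$) mirrors the paper's, but two of your steps contain genuine gaps. First, the claim that $\sup_t\|u(\cdot,t)\|_{L^\kappa(\Omega)}<\infty$ together with Lemma \ref{llssdrffmmggnnccvvccvvkkkkgghhkkllvvlemma45630}(ii) yields $\|\nabla v(\cdot,t)\|_{L^r(\Omega)}\le C$ \emph{for every finite $r$} is false: the semigroup kernel $\tau^{-\frac12-\frac N2(\frac1\kappa-\frac1r)}$ is only integrable for $\frac1r>\frac1\kappa-\frac1N$, i.e.\ $r<\frac{N\kappa}{N-\kappa}$ when $\kappa<N$. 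Since $\kappa$ is constrained to lie in $(\frac N2,\frac1{\chi^2})$, it need not reach $N$, and in fact as $\chi\uparrow\sqrt{2/N}$ the admissible window for $\kappa$ collapses to $\frac N2$, whence $\frac{N\kappa}{N-\kappa}\downarrow N$ and the available range for $r$ shrinks to $[1,N)$. Consequently your Young-absorption $u^p|\nabla v|^2\le\varepsilon u^{p+1}+C_\varepsilon|\nabla v|^{2(p+1)}$ needs $\nabla v\in L^{2(p+1)}$ and thus only closes for $p$ below a \emph{finite} threshold (roughly $\frac{N\kappa}{2(N-\kappa)}-1$), not for all $p$; the proposed ODI $\frac{d}{dt}\int u^p+\int u^p\le C_p$ is therefore not established for arbitrary $p$, and the Moser--Alikakos step is left without its hypotheses.

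Second, your ``alternation'' bootstrap to reach $\|v\|_{L^\infty}$ is sketched but not carried out, and you correctly flag it as ``the hard part''; as stated, it is not clear the exponents escape $\frac N2$ in finitely many steps while paying the strict inequality cost at each application of Lemma \ref{llssdrffmmggnnccvvccvvkkkkgghhkkllvvlemma45630}(i). The paper avoids both issues with a more economical argument: rather than driving $v$ up to $L^\infty$, it fixes a single exponent $l_0\in(\frac N2,\kappa)$ with $l_0<\frac{N(\kappa-q_0)}{N-2q_0}$, applies H\"older to get $\|u\|_{L^{l_0}}\le C\|v\|_{L^{l_0q_0/(\kappa-l_0)}}^{q_0/\kappa}$, bounds this latter $v$-norm by $C(1+\sup_s\|u(\cdot,s)\|_{L^{l_0}})$ via the semigroup, and closes the resulting self-referential inequality immediately because the exponent $q_0/\kappa<1$ is sublinear. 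It then obtains $\|\nabla v\|_{L^\theta}$ only for \emph{some} $\theta>N$, and rather than attempting to bound $\int u^p$ for all $p$, runs a direct fixed-point argument on $M(T):=\sup_{t<T}\|u(\cdot,t)\|_{L^\infty}$ through the Duhamel representation of $u$, producing $M(T)\le C+CM^b(T)$ with $b<1$. If you want to salvage your route, you must either (a) genuinely verify the alternation escapes $\frac N2$ and accept that the final $L^\infty$ bound is still best obtained by the variation-of-constants argument (not the $\int u^p$ energy chain), or (b) replace the ``all finite $r$'' claim by the honest $r<\frac{N\kappa}{N-\kappa}>N$ and then imitate the paper's $M(T)$ fixed-point step.
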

\begin{proof}
Firstly, according to Corollary \ref{corollaryssssssa45ddfffff630223} and Lemma \ref{wsdelemma45}, we pick $\kappa>\frac{N}{2}$ and $q_0\in (0,\frac{N}{2})$ such that 
\begin{equation}
\begin{array}{rl}
&\disp{\int_{\Omega}u^{\kappa}v^{-q_0}\leq C_1~~~\mbox{for all}~~t\in(0,T_{max}).}
\\
\end{array}
\label{1111cz2.511ssdddkkddfddfssdddddffffffgggdffffkkdfgsdffffg41sddggggddfgddfggggddffff14}
\end{equation}
holds with some $C_1>0.$ Since $q_0<\frac{N}{2}$ and $\kappa>\frac{N}{2}$, it is possible to fix $l_0\in(\frac{N}{2},\kappa)$ such that
$l_0<\frac{N(\kappa-q_0)}{N-2q_0}$. 
Using \dref{1111cz2.511ssdddkkddfddfssdddddffffffgggdffffkkdfgsdffffg41sddggggddfgddfggggddffff14}, we find that
\begin{equation}
\begin{array}{rl}
\left(\disp\int_{\Omega}u^{l_0}\right)^{\frac{1}{l_0}}\leq&\disp{\left(\int_{\Omega}u^{\kappa}v^{-q_0}\right)^{\frac{1}{\kappa}}
\left(\int_{\Omega}v^{\frac{l_0q_0}{\kappa-l_0}}\right)^{\frac{\kappa-l_0}{l_0\kappa}}}
\\
\leq&\disp{C_1^{\frac{1}{\kappa}}
\left(\int_{\Omega}v^{\frac{l_0q_0}{\kappa-l_0}}\right)^{\frac{\kappa-l_0}{\kappa}}}
\\
=&\disp{C_1^{\frac{1}{\kappa}}
\|v(\cdot,t)\|_{L^{{\frac{l_0q_0}{\kappa-l_0}}}(\Omega)}^\frac{q_0}{\kappa}~~~\mbox{for all}~~t\in(0,T_{max}),}
\\
\end{array}
\label{1111cz2.511ssdddkkddfddfddffssdddddffffffgggdffffkkdfgsdffffg41sddggggddfgddfggggddffff14}
\end{equation}
where  the
H\"{o}lder inequality has been used.
Since, $l_0<\frac{N(\kappa-q_0)}{N-2q_0}$ implies that
$$\frac{N}{2}[\frac{1}{l_0}-\frac{\kappa-l_0}{l_0q_0}]<1,$$
so that, by Lemma \ref{llssdrffmmggnnccvvccvvkkkkgghhkkllvvlemma45630}  with \dref{cz2.5ghju48cfg924ghyuji}, we have
\begin{equation}\begin{array}{rl}
&\| v(\cdot,t)\|_{L^\frac{l_0q_0}{\kappa-l_0}(\Omega)}\\
\leq&\disp \| e^{t(\Delta-1)}v_0\|_{L^\frac{l_0q_0}{\kappa-l_0}(\Omega)} +\int_{0}^{t}\| e^{(t-s)(\Delta-1)}(u(\cdot,s)-\bar{u})\|_{L^\frac{l_0q_0}{\kappa-l_0}(\Omega)} ds+\int_{0}^{t}\| e^{(t-s)(\Delta-1)}\bar{u}\|_{L^\frac{l_0q_0}{\kappa-l_0}(\Omega)} ds\\
\leq&\disp c_1\| v_0\|_{L^\infty(\Omega)} +c_1\int_{0}^{t}(1+(t-s)^{-\frac{N}{2}(\frac{1}{l_0}-\frac{\kappa-l_0}{l_0q_0})})e^{-(\lambda_1+1)(t-s) }
\|u(\cdot,s)-\bar{u}\|_{L^{l_0}(\Omega)} ds\\
&\disp+c_1\lambda\int_{0}^{t}e^{-(\lambda_1+1)(t-s) } ds\\
\leq&\disp c_1\| v_0\|_{L^\infty(\Omega)} +c_1\sup_{s\in(0,T_{max})}\|u(\cdot,s)\|_{L^{l_0}(\Omega)}\int_{0}^{t}(1+(t-s)^{-\frac{N}{2}(\frac{1}{l_0}-
\frac{\kappa-l_0}{l_0q_0})})e^{-(\lambda_1+1)(t-s) } ds\\
&\disp+c_1\lambda\int_{0}^{t}e^{-(\lambda_1+1)(t-s) } ds\\
\leq&\disp  C_2(1+\sup_{s\in(0,T_{max})}\|u(\cdot,s)\|_{L^{l_0}(\Omega)})~~~\mbox{for all}~~t\in(0,T_{max}),\\
\end{array}
\label{22222ssdddd23455cz2.ddff5ghjsdddddffgggu48cfg9sdddd24ghyuji}
\end{equation}
where  $\bar{u} = \bar{u}(t) :=
\frac{1}{|\Omega|}\int_{\Omega}u$
and
$$C_2= c_1\max\left\{\| v_0\|_{L^\infty(\Omega)}+\frac{\lambda}{\lambda_1+1},\int_{0}^{\infty}(1+(t-s)^{\frac{N}{2}(1-\frac{p-\alpha}{q\alpha})})e^{-(\lambda_1+1)(t-s) }  ds\right\}.$$
Here  $c_1$ is the same as Lemma \ref{llssdrffmmggnnccvvccvvkkkkgghhkkllvvlemma45630}.
Therefore,
\begin{equation}\begin{array}{rl}
\sup_{t\in(0,T_{max})}\| v(\cdot,t)\|_{L^\frac{l_0q_0}{\kappa-l_0}(\Omega)}\leq  C_2(1+\sup_{s\in(0,T_{max})}\|u(\cdot,s)\|_{L^{l_0}(\Omega)}).\\
\end{array}
\label{22222ssdddssdddd23455cz2.ddff5ghjsdddddffgggu48cfg9sdddd24ghyuji}
\end{equation}
Therefore, there is $C_3 > 0$ fulfilling
\begin{equation}\begin{array}{rl}
\sup_{t\in(0,T_{max})}\| u(\cdot,t)\|_{L^{l_0}(\Omega)}\leq  C_3(1+\left(\sup_{s\in(0,T_{max})}\|u(\cdot,s)\|_{L^{l_0}(\Omega)}\right)^{\frac{q_0}{\kappa}})\\
\end{array}
\label{22222ssdddssdddd23dfffff455cz2.ddff5ghjsdddddffgggu48cfg9sdddd24ghyuji}
\end{equation}
by using \dref{1111cz2.511ssdddkkddfddfddffssdddddffffffgggdffffkkdfgsdffffg41sddggggddfgddfggggddffff14}.
Upon the observation that $\frac{q_0}{\kappa}<1$ due to $\kappa>\frac{N}{2}>q_0$, we can conclude
\begin{equation}
\begin{array}{rl}
&\disp{\sup_{t\in(0,T_{max})}\|u(\cdot,t)\|_{L^{{l_0}}(\Omega)}\leq\tilde{\lambda}.}
\\
\end{array}
\label{cz2.511ssddddsdffffgg41sddggggddddddddfgddfggggddffff14}
\end{equation}
%
%
Now,   collecting  \dref{cz2.5ghju48cfg924ghyuji} and \dref{cz2.511ssddddsdffffgg41sddggggddddddddfgddfggggddffff14},
we derive that for
some $r_0\geq 1$ 
 satisfying $r_0> \frac{N}{2}$,
\begin{equation}
\begin{array}{rl}
&\disp{\sup_{t\in(0,T_{max})}\|u(\cdot,t)\|_{L^{{r_0}}(\Omega)}\leq\tilde{\lambda}_1.}
\\
\end{array}
\label{cz2.511ssddddfgg41sddggggddddddddfgddfggggddffff1422}
\end{equation}
Now,
 involving the variation-of-constants formula
for $v$ and $L^p$-$L^q$ estimates for the heat semigroup again, we derive that for $\theta\in[1,\frac{N r_0}{N- r_0}),$ there exists a positive constant $C_4$ such that
\begin{equation}\begin{array}{rl}
&\|\nabla v(\cdot,t)\|_{L^\theta(\Omega)}\\
\leq&\disp \|\nabla e^{t(\Delta-1)}v_0\|_{L^\theta(\Omega)} +\int_{0}^{t}\|\nabla e^{(t-s)(\Delta-1)}u(\cdot,s)\|_{L^\theta(\Omega)} ds\\
\leq&\disp c_2\|\nabla v_0\|_{L^\infty(\Omega)} +c_2\int_{0}^{t}(1+(t-s)^{-\frac{1}{2}-\frac{N}{2}(\frac{1}{ r_0}-\frac{1}{\theta})})e^{-\lambda_1(t-s) }\|u(\cdot,s)\|_{L^ {r_0}(\Omega)} ds\\
\leq&\disp c_2\|\nabla v_0\|_{L^\infty(\Omega)} +c_2\tilde{\lambda}_1 \int_{0}^{t}(1+(t-s)^{-\frac{1}{2}-\frac{N}{2}(\frac{1}{ {r_0}}-\frac{1}{\theta})})e^{-\lambda_1(t-s) } ds\\
\leq&\disp  C_4~~~\mbox{for all}~~t\in(0,T_{max})\\
\end{array}
\label{1112ssdddd23455cz2.ddff5ghjsdddddffgggu48cfg924ghyuji}
\end{equation}
by combining \dref{cz2.511ssddddfgg41sddggggddddddddfgddfggggddffff1422} with Lemma \ref{llssdrffmmggnnccvvccvvkkkkgghhkkllvvlemma45630},
where  $c_2$ is the same as Lemma \ref{llssdrffmmggnnccvvccvvkkkkgghhkkllvvlemma45630}.
Here we have used the fact that
$$\int_{0}^{t}(1+(t-s)^{-\frac{1}{2}-\frac{N}{2}(\frac{1}{ {r_0}}-\frac{1}{\theta})})e^{-\lambda_1(t-s) } ds\leq \int_{0}^{\infty}(1+s^{-\frac{1}{2}-\frac{N}{2}(\frac{1}{ {r_0}}-\frac{1}{\theta})})e^{-\lambda_1s } ds<+\infty.$$
Therefore, there  is  $C_5 > 0$ satisfies 
\begin{equation}
\int_{\Omega}|\nabla v|^{\theta} \leq C_5~~\mbox{for all}~~ t\in(0, T_{max})~~~\mbox{and}~~~\theta\in[1,\frac{N {r_0}}{N- {r_0}})
\label{2ssdddd23455cz2.ddff5ghjsdddddffgggu48cfg924ghyuji}
\end{equation}
by \dref{1112ssdddd23455cz2.ddff5ghjsdddddffgggu48cfg924ghyuji}.
Next, fix $T\in (0, T_{max})$, let $M(T):=\sup_{t\in(0,T)}\|u(\cdot,t)\|_{L^\infty(\Omega)}$ and $\tilde{h} :=\nabla v $. Then by  \dref{2ssdddd23455cz2.ddff5ghjsdddddffgggu48cfg924ghyuji},
there exists $C_6 > 0$ such that
\begin{equation}
\begin{array}{rl}
\|\tilde{h} (\cdot, t)\|_{L^{\theta}(\Omega)}\leq&\disp{C_6~~ \mbox{for all}~~ t\in(0,T_{max})~~\mbox{and some }~~N<\theta_0<\frac{N {r_0}}{N- {r_0}}.}\\
\end{array}
\label{cz2ddff.57151ccvhhjjjkkkuuifghhhivhccvvhjjjkkhhggjjllll}
\end{equation}
Next, 
by means of an
associate variation-of-constants formula once again, one  can derive that  for any $t\in(t_0, T)$,
\begin{equation}
u (t)=e^{(t-t_0)\Delta}u (\cdot,t_0)-\chi\int_{t_0}^{t}e^{(t-s)\Delta}\nabla\cdot(\frac{u (\cdot,s)}{v (\cdot,s)}\tilde{h} (\cdot,s)) ds+\int_{t_0}^{t}e^{(t-s)\Delta}(au(\cdot,s)-\mu u^2(\cdot,s))ds,
\label{5555fghbnmcz2.5ghjjjkkklu48cfg924ghyuji}
\end{equation}
where $t_0 := (t-1)_{+}$.
If $t\in(0,1]$,
by virtue of the maximum principle, we derive that
\begin{equation}
\begin{array}{rl}
\|e^{(t-t_0)\Delta}u (\cdot,t_0)\|_{L^{\infty}(\Omega)}\leq &\disp{\|u_0\|_{L^{\infty}(\Omega)},}\\
\end{array}
\label{zjccffgbhjffghhjcghhhjjjvvvbscz2.5297x96301ku}
\end{equation}
while if $t > 1$, we estimate the  first integral on the right  of \dref{5555fghbnmcz2.5ghjjjkkklu48cfg924ghyuji} by means of the Neumann heat semigroup and Lemma \ref{llssdrffmmggnnccvvccvvkkkkgghhkkllvvlemma45630} according to
 %
%
\begin{equation}
\begin{array}{rl}
\|e^{(t-t_0)\Delta}u (\cdot,t_0)\|_{L^{\infty}(\Omega)}\leq &\disp{C_7(t-t_0)^{-\frac{N}{2}}\|u (\cdot,t_0)\|_{L^{1}(\Omega)}\leq C_8.}\\
\end{array}
\label{zjccffgbhjffghhjcghghjkjjhhjjjvvvbscz2.5297x96301ku}
\end{equation}
Now,  in view of \dref{cz2.5ghju48cfg924ghyuji} and \dref{eqxhddfffgfgghhj45xx12112},  we fix an arbitrary $p\in(N,\theta)$ and then once more invoke known smoothing
properties of the
Stokes semigroup  and the H\"{o}lder inequality to find $C_9 > 0$ such that
\begin{equation}
\begin{array}{rl}
&\disp \chi\int_{t_0}^t\| e^{(t-s)\Delta}\nabla\cdot(\frac{u (\cdot,s)}{v (\cdot,s)}\tilde{h} (\cdot,s)\|_{L^\infty(\Omega)}ds\\
\leq&\disp C_9\int_{t_0}^t(1+(t-s)^{-\frac{1}{2}-\frac{N}{2p}})e^{-\lambda_1(t-s) }\|u (\cdot,s)\tilde{h} (\cdot,s)\|_{L^p(\Omega)}ds\\
\leq&\disp C_9\int_{t_0}^t(1+(t-s)^{-\frac{1}{2}-\frac{N}{2p}})e^{-\lambda_1(t-s) }\| u (\cdot,s)\|_{L^{\frac{p\theta}{\theta-p}}(\Omega)}\|\tilde{h} (\cdot,s)\|_{L^{\theta}(\Omega)}ds\\
\leq&\disp C_9\int_{t_0}^t(1+(t-s)^{-\frac{1}{2}-\frac{N}{2p}})e^{-\lambda_1(t-s) }\| u (\cdot,s)\|_{L^{\infty}(\Omega)}^b\| u (\cdot,s)\||_{L^1(\Omega)}^{1-b}\|\tilde{h} (\cdot,s)\|_{L^{\theta}(\Omega)}ds\\
\leq&\disp C_{10}M^b(T)~~\mbox{for all}~~ t\in(0, T),\\
\end{array}
\label{ccvbccvvbbnnndffghhjjvcvvbccfbbnfgbghjjccmmllffvvggcvvvvbbjjkkdffzjscz2.5297x9630xxy}
\end{equation}
where $b:=\frac{p\theta-\theta+p}{p\theta}\in(0,1)$
and
$$C_{10}:=C_9\lambda^{1-b}C_6\int_{0}^{1}(1+\sigma^{-\frac{1}{2}-\frac{N}{2p}})e^{-\lambda_1\sigma }d\sigma.$$
Since $p>N$, we conclude that
$-\frac{1}{2}-\frac{N}{2p}>-1$.
Similarly, due to Lemma \ref{llssdrffmmggnnccvvccvvkkkkgghhkkllvvlemma45630},  we can estimate the third  integral on the right of \dref{5555fghbnmcz2.5ghjjjkkklu48cfg924ghyuji}  as follows:
\begin{equation}
\begin{array}{rl}
\disp\int_{t_0}^t \|e^{(t-s)\Delta}(au(\cdot,s)-\mu u^2(\cdot,s))\|_{L^\infty(\Omega)}ds\leq&
\disp\int_{t_0}^t\sup_{u\geq0}(au-\mu u^2)_{+}ds\\
\leq&\disp\int_{t_0}^ta_{+}^{2}\mu^{-1}2^{-2}\\
\leq&\disp \frac{a^{2}}{4\mu}.\\
\end{array}
\label{ccvbccvvbbnnndffghhjdfghhhhjvcvvbccfbbnfgbghjjccmmllffvvvvvbbjjfzjscz2.5297x9630xxy}
\end{equation}
so that, in view of the definition of $M(T)$,
there exists a positive $C_{11}$ such that
%
\begin{equation}
\begin{array}{rl}
&\disp  M(T)\leq C_{11}+C_{11}M^b(T)~~\mbox{for all}~~ T\in(0, T_{max})\\
\end{array}
\label{ccvbccvvbbnnndffghhjjvcvvfghhhbccfbbnfgbghjjccmmllffvvggcvvvvbbjjkkdffzjscz2.5297x9630xxy}
\end{equation}
by using \dref{5555fghbnmcz2.5ghjjjkkklu48cfg924ghyuji}--\dref{ccvbccvvbbnnndffghhjjvcvvbccfbbnfgbghjjccmmllffvvggcvvvvbbjjkkdffzjscz2.5297x9630xxy}.
By comparison, this implies that
\begin{equation}
\begin{array}{rl}
\|u (\cdot, t)\|_{L^{\infty}(\Omega)}\leq&\disp{C_{12}~~ \mbox{for all}~~ t\in(0,T_{max}),}\\
\end{array}
\label{cz2.57ghhhh151ccvhhjjjkkkffgghhuuiivhccvvhjjjkkhhggjjllll}
\end{equation}
due to $b<1$ and $T\in (0, T_{max})$ was arbitrary.
Finally, with the regularity properties from \dref{cz2.57ghhhh151ccvhhjjjkkkffgghhuuiivhccvvhjjjkkhhggjjllll} at hand, one can readily
derive
\begin{equation}
\begin{array}{rl}
\| v (\cdot, t)\|_{W^{1,\infty}(\Omega)}\leq C_{13}~~ \mbox{for all}~~ t\in(0,T_{max})\\
\end{array}
\label{zjccffgbhjcvvvbscz2.5297x96301ku}
\end{equation}
 by means of standard parabolic regularity arguments applied to the second equation in
\dref{111.ssderrfff1}.
The proof Lemma \ref{lemkkkkmssskklllla45630223} is completed.
\end{proof}


We are now in a position to prove Theorem \ref{theorem3}.


{\bf The proof of Theorem \ref{theorem3}}~
In view of \dref{cz2.511dfddfffgg41sddfgggddffff14}, we apply Lemma \ref{lemma70} to reach a contradiction.
 Hence the  classical solution $(u,v)$ of \dref{111.ssderrfff1} is global in time and bounded. Finally, employing  the same arguments as in the proof of Lemma 1.1 in \cite{Winkler37103}, and taking advantage of Lemma \ref{lemkkkkmssskklllla45630223}, we conclude the uniqueness of solution to \dref{111.ssderrfff1}.

 \section{Asymptotic behavior}
In this section we study the long-time behavior for \dref{111.ssderrfff1} in the case $\mu$ is large enough.
%
The goal of this section will be to establish the convergence properties stated in
Theorem \ref{theorem3}. The key idea of our approach is to use the variation-of-constants formula, the form of which is inspired by \cite{Winkler79312} (see also \cite{Tao79477ddffvg,Zhengssddghhhhjmaa22}).
To show the global asymptotic stability of $(\frac{a}{\mu }, \frac{a}{\mu }),$ it will be convenient to introduce the following notation:
 \begin{equation}
U(x,t)=\frac{\mu }{a}u(x,t)~~\mbox{and}~~V(x,t)=v(x,t)-\frac{a}{\mu }.
\label{ddddffffffhddffgggghhhzaxscdfv1.1}
\end{equation}
Accordingly, 
  we see $(U, V)$ have the following properties:
 \begin{equation}
 \left\{\begin{array}{ll}
    U_t=\Delta U-\chi \nabla\cdot(\frac{U}{v}\nabla V)+aU(1-U),~~x\in \Omega, t>0,\\
 \disp{ V_t=\Delta V- V+\frac{a}{\mu }(U-1)},~~x\in \Omega, t>0,\quad\\
 \disp{\frac{\partial U}{\partial \nu}=\frac{\partial V}{\partial \nu}=0},\quad
x\in \partial\Omega, t>0,\\
\disp{U(x,0):=U_0(x)=\frac{\mu }{a}u_0(x),V(x,0):=V_0(x)=v(x,t)-\frac{a}{\mu },~
x\in \Omega}\\
 \end{array}\right.\label{ddddffffffhhhhzaxscdfv1.1}
\end{equation}
by \dref{111.ssderrfff1} and a
  straightforward computation.

From the proof of Lemma  \ref{lemmssssdddssssssa45ddfffff630223}, we derive that: 
there exists a positive constant $k_0$  independent of $\mu$ such that
 \begin{equation}\label{eqddddxhddfffgfggdfghhhhj45xx12112}
\frac{1}{v^2}\leq  k_0~~\mbox{for all}~~x\in\Omega~~~\mbox{and}~~~t>0,
\end{equation}
where $k_0=\frac{1}{\eta_0^2}$.

The crucial idea of the proof of Theorem \ref{theoremssdddffggdd3} is to show   a  Lyapunov functional for  \dref{ddddffffffhhhhzaxscdfv1.1} under  large enough of  $\mu$.
The main idea of Theorem \ref{theoremssdddffggdd3} comes from the
proof of Lemma 3.7 of \cite{Tao79477ddffvg}.

\begin{lemma}\label{lemmaddffffdfffgg4dddd5630}
 Let $(u, v)$ be a global classical solution of \dref{111.ssderrfff1}. 
 Then if 
\begin{equation}\mu>\max\{1,a\chi k_0\frac{\sqrt{2}}{4}\},\label{dffffcz2.51141ssdderfttddfggt14}
\end{equation}
then for all $t > 0$ the function
\begin{equation}
\begin{array}{rl}
&\disp{F(t):=\int_{\Omega}(U-1-\ln U)}+\frac{L}{2} \disp\int_{\Omega}V^2
\\
\end{array}
\label{cz2.51141ssdderfttddfggt14}
\end{equation}
satisfies
\begin{equation}
\begin{array}{rl}
&\disp{F'(t)\leq -G(t)}
\\
\end{array}
\label{cz2.51141ssdddgggderfttddfggt14}
\end{equation}
with
\begin{equation}
G(t)=G_0(\int_{\Omega}(U-1)^2+\frac{L}{2}\int_{\Omega}V^2)\label{cz2.511ffff41ssdddgggderdffffttddfggt14}
\end{equation}
and
$$G_0=\min\{a-\frac{L}{2}(\frac{a}{\mu })^{{2}},L-\frac{\chi^2k_0}{4}\}>0,$$
where $L$ is a  positive constant which satisfies that
\begin{equation}\frac{\chi^2k_0}{4}<L<\frac{2\mu^2}{a^2}
\label{dffzjscz2.5297xssedrttggg9ghhhh6ddfgggdddd30ffgggxxy}
\end{equation}
and
$k_0$ is the same as \dref{eqddddxhddfffgfggdfghhhhj45xx12112}.
\end{lemma}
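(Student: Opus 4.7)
The plan is to differentiate $F$ directly along the transformed system \eqref{ddddffffffhhhhzaxscdfv1.1} and produce the claimed dissipation after two Young splittings.  First I would test the $U$-equation of \eqref{ddddffffffhhhhzaxscdfv1.1} by the multiplier $1-\tfrac{1}{U}$; integration by parts (using the no-flux conditions and the strict positivity of $U$ from Lemma \ref{lemma70}) generates the good dissipation $-\int_{\Omega}|\nabla U|^{2}/U^{2}$, the logistic contribution $-a\int_{\Omega}(U-1)^{2}$, and a residual chemotaxis cross term $\chi\int_{\Omega}(Uv)^{-1}\nabla U\cdot\nabla V$.  Writing the cross term as $\chi(|\nabla U|/U)\cdot(|\nabla V|/v)$ and applying Young's inequality (with equal weights) allows me to absorb a full copy of $\int_{\Omega}|\nabla U|^{2}/U^{2}$ at the price of $\tfrac{\chi^{2}}{4}\int_{\Omega}v^{-2}|\nabla V|^{2}$; the uniform lower bound on $v$ recorded in \eqref{eqddddxhddfffgfggdfghhhhj45xx12112}, inherited from Lemma \ref{lemmssssdddssssssa45ddfffff630223}, then replaces $v^{-2}$ by $k_{0}$ and I arrive at
\[
\tfrac{d}{dt}\int_{\Omega}(U-1-\ln U)\leq -a\int_{\Omega}(U-1)^{2}+\tfrac{\chi^{2}k_{0}}{4}\int_{\Omega}|\nabla V|^{2}.
\]

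Next I would test the $V$-equation of \eqref{ddddffffffhhhhzaxscdfv1.1} by $LV$, with $L>0$ still to be fixed.  Integration by parts yields $-L\int_{\Omega}|\nabla V|^{2}-L\int_{\Omega}V^{2}+(La/\mu)\int_{\Omega}V(U-1)$, and an elementary Young splitting
\[
\tfrac{La}{\mu}V(U-1)\leq \tfrac{L}{2}V^{2}+\tfrac{L}{2}\bigl(\tfrac{a}{\mu}\bigr)^{2}(U-1)^{2}
\]
cancels exactly half of the $V^{2}$ damping and produces a controllable remainder on $(U-1)^{2}$.  Summing with the previous display gives the all-in-one inequality
\[
F'(t)\leq -\Bigl(a-\tfrac{L}{2}\bigl(\tfrac{a}{\mu}\bigr)^{2}\Bigr)\int_{\Omega}(U-1)^{2}-\Bigl(L-\tfrac{\chi^{2}k_{0}}{4}\Bigr)\int_{\Omega}|\nabla V|^{2}-\tfrac{L}{2}\int_{\Omega}V^{2}.
\]
Choosing $L$ in the admissible window \eqref{dffzjscz2.5297xssedrttggg9ghhhh6ddfgggdddd30ffgggxxy} renders both bracketed coefficients strictly positive, so the $|\nabla V|^{2}$ term can be dropped and what remains rearranges precisely into $-G_{0}\bigl(\int_{\Omega}(U-1)^{2}+\tfrac{L}{2}\int_{\Omega}V^{2}\bigr)$, which is the desired bound.

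The non-routine point is the non-emptiness of the admissible window for $L$: requiring $\chi^{2}k_{0}/4<2\mu^{2}/a^{2}$ is exactly what is traded in for the hypothesis \eqref{dffffcz2.51141ssdderfttddfggt14} on $\mu$, so the largeness of $\mu$ feeds in solely through this interval condition.  The genuinely delicate step will be the bookkeeping of the two Young inequalities: if the first one absorbed more than a single copy of $\int_{\Omega}|\nabla U|^{2}/U^{2}$, the coefficient left on $|\nabla V|^{2}$ would be larger than $\chi^{2}k_{0}/4$ and would force $L$, and hence $\mu$, to be larger than stated; tightness in the absorption is therefore essential.  Everything else reduces to standard integration by parts on a Neumann problem, which is legitimate because of the no-flux boundary conditions, the strict positivity of $U$, and the pointwise lower bound on $v$.
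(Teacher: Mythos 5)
Your proposal is correct and follows essentially the same route as the paper: the same Lyapunov functional $F$, the same multipliers ($1-1/U$ for the $U$-equation and $LV$ for the $V$-equation), the same two Young inequalities absorbing exactly one copy of $\int_{\Omega}|\nabla U|^2/U^2$ and half of $\int_{\Omega}V^2$, and the same linear combination to drop the $|\nabla V|^2$ term. Your observation that the largeness of $\mu$ enters solely through the non-emptiness of the window for $L$ is precisely what the paper uses, and the claimed tightness of the first absorption is indeed the decisive point.
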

\begin{proof}
Firstly,  multiplying the second equation in \dref{ddddffffffhhhhzaxscdfv1.1} by $V$, we obtain
\begin{equation}
\begin{array}{rl}
\disp\frac{1}{2}\disp\frac{d}{dt}\int_{\Omega}V^2+\disp\int_{\Omega}|\nabla V|^2+
\int_{\Omega}V^2=&\disp{
\frac{a}{\mu }\int_{\Omega}V(U-1) }\\
\leq&\disp{\frac{1}{2} \int_{\Omega}V^2+\frac{1}{2}\left(\frac{a}{\mu }\right)^2\int_{\Omega}(U-1)^2~~ \mbox{for all}~~ t>0}\\
\end{array}
\label{cz2.51141sdfghhsdfffgddehhjjjjssddrfddffgggttt14}
\end{equation}
by using  the Young inequality.
Therefore, we derive from \dref{cz2.51141sdfghhsdfffgddehhjjjjssddrfddffgggttt14} that
\begin{equation}
\begin{array}{rl}
\disp\frac{1}{2}\disp\frac{d}{dt}\int_{\Omega}V^2+\disp\int_{\Omega}|\nabla V|^2+\frac{1}{2}\
\int_{\Omega}V^2\leq&\disp{ \frac{1}{2}\left(\frac{a}{\mu }\right)^2\int_{\Omega}(U-1)^2~~ \mbox{for all}~~ t>0.}\\
\end{array}
\label{cz2.51141sdfghhsddehhjjjjssddrfddffgggttt14}
\end{equation}
On the other hand,
the strong maximum principle along with the assumption $U_0\not\equiv 0$ yields
$U>0$ in $\bar{\Omega}\times(0, +\infty)$.
Relying on this,
we  multiply the first equation in \dref{ddddffffffhhhhzaxscdfv1.1}  by $1-\frac{1}{U}$ and
integrate by parts, then by 
 \dref{eqddddxhddfffgfggdfghhhhj45xx12112},
%
%
\begin{equation}
\begin{array}{rl}
&\disp{\frac{d}{dt}\int_{\Omega}(U-1-\ln U)}
\\
=&\disp{-\int_{\Omega}\frac{|\nabla U|^2}{U^2}+\chi\int_{\Omega}\frac{1}{Uv}\nabla U\cdot\nabla v
-a\int_{\Omega}(U-1)^2}\\
\leq&\disp{-\int_{\Omega}\frac{|\nabla U|^2}{U^2}+\int_{\Omega}\frac{|\nabla U|^2}{U^2}+\frac{\chi^2}{4}\int_{\Omega}\frac{|\nabla V|^2}{v^2}-
a\int_{\Omega}(U-1)^2}\\
\leq&\disp{\frac{\chi^2k_0}{4}\int_{\Omega}|\nabla V|^2
-a\int_{\Omega}(U-1)^2~~ \mbox{for all}~~ t>0}\\
\end{array}
\label{cz2.51141ssdderfttt14}
\end{equation}
by the Young inequality, where $k_0$ is the same as \dref{eqddddxhddfffgfggdfghhhhj45xx12112}.
Observe that \dref{dffzjscz2.5297xssedrttggg9ghhhh6ddfgggdddd30ffgggxxy},
let
$\dref{cz2.51141sdfghhsddehhjjjjssddrfddffgggttt14}\times L
+\dref{cz2.51141ssdderfttt14}$, then we deduce
\begin{equation}
\begin{array}{rl}
&\disp{\frac{d}{dt}\int_{\Omega}(U-1-\ln U)+(a-\frac{L}{2}(\frac{a}{\mu })^{{2}})\int_{\Omega}(U-1)^2}\\
&\disp{+\frac{L}{2} \disp\frac{d}{dt}\int_{\Omega}V^2+(L-\frac{\chi^2k_0}{4})\int_{\Omega}|\nabla V|^2+\frac{L}{2}
\int_{\Omega}V^2}
\\
\leq&\disp{0~~ \mbox{for all}~~ t>0,}\\
\end{array}
\label{cz2.51141ssdderfttddfggt14}
\end{equation}
which together with the definition  of $F$ and $G$   implies that \dref{cz2.51141ssdddgggderfttddfggt14} holds.
\end{proof}

\begin{lemma}\label{fghfbglemma4563025xxhjklojjkkkgyhuissddff}
Assume that  the conditions in Theorem \ref{theorem3} are satisfied.
 Let $(u, v)$ be a global classical solution of \dref{111.ssderrfff1}. 
There is $\alpha > 0$ such that $u, v \in C^{\alpha,\frac{\alpha}{2}}(\bar{\Omega}\times( 1,+\infty))$. Moreover, there exists a positive constant
$C$ such that for
every $t>  1$,
\begin{equation}
\begin{array}{rl}\label{gbhngdddfffdfgggggeqx45xx1211}
&\disp{\|u(\cdot,t)\|_{C^{\alpha,\frac{\alpha}{2}}(\bar{\Omega}\times( 1,+\infty))}+\|v(\cdot,t)\|_{C^{\alpha,\frac{\alpha}{2}}(\bar{\Omega}\times( 1,+\infty))}\leq C}
\\
\end{array}
\end{equation}
and
\begin{equation}
\begin{array}{rl}\label{gbhngddfgggggeqx45xx1211}
&\disp{\|u(\cdot,t)\|_{W^{1,\infty}(\Omega)}+\|v(\cdot,t)\|_{W^{1,\infty}(\Omega)}\leq C.}
\\
\end{array}
\end{equation}
\end{lemma}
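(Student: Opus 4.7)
The plan is to promote the $L^{\infty}$-bounds on $u$ and $\nabla v$ already established to uniform Hölder and spatial Lipschitz bounds by an iterated application of standard parabolic regularity theory, exploiting the positive pointwise lower bound for $v$. To begin with, I would recall that Theorem~\ref{theorem3} together with Lemma~\ref{lemkkkkmssskklllla45630223} yields
$$\sup_{t>0}\bigl(\|u(\cdot,t)\|_{L^{\infty}(\Omega)}+\|\nabla v(\cdot,t)\|_{L^{\infty}(\Omega)}\bigr)<\infty,$$
while Lemma~\ref{lemmssssdddssssssa45ddfffff630223} supplies $v\geq\eta_0>0$ throughout $\bar{\Omega}\times(0,\infty)$. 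Rewriting the first equation of \dref{111.ssderrfff1} in divergence form,
$$u_t=\nabla\cdot\!\left(\nabla u-\chi\frac{u}{v}\nabla v\right)+au-\mu u^2,$$
the drift coefficient $\chi u v^{-1}\nabla v$ and the source $au-\mu u^2$ are bounded in $L^{\infty}(\Omega\times(0,\infty))$.

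The core of the argument is then a two-step parabolic bootstrap. First I would invoke the De~Giorgi--Nash--Moser-type Hölder regularity theorem for parabolic equations in divergence form with bounded measurable coefficients (as in Theorem~III.10.1 of Ladyzhenskaya--Solonnikov--Ural'tseva, or the Porzio--Vespri refinement) on shifted cylinders $\Omega\times[\tau,\tau+2]$. This produces an exponent $\alpha_1\in(0,1)$ and a constant $C_1>0$, both independent of $\tau$, such that $\|u\|_{C^{\alpha_1,\alpha_1/2}(\bar{\Omega}\times[\tau+1,\tau+2])}\leq C_1$ for every $\tau\geq 0$. Concatenating over $\tau$ yields uniform Hölder regularity of $u$ on $\bar{\Omega}\times(1,\infty)$. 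Feeding this into the second equation $v_t=\Delta v-v+u$ and applying parabolic Schauder estimates on unit time-cylinders delivers $v\in C^{2+\alpha_1,1+\alpha_1/2}(\bar{\Omega}\times(1,\infty))$ with a uniform bound, which in particular proves the statements for $v$ in \dref{gbhngdddfffdfgggggeqx45xx1211} and \dref{gbhngddfgggggeqx45xx1211}. Because $v\geq\eta_0$, the coefficient $\chi v^{-1}\nabla v$ then becomes uniformly Hölder continuous; a further Schauder pass on the first equation gives $u\in C^{2+\alpha,1+\alpha/2}$ uniformly for $t>1$, providing the remaining $\|\nabla u\|_{L^{\infty}}$ bound.

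The only real subtlety I anticipate is checking that all constants can be chosen uniformly in $t$. This reduces to the observation that each invocation of parabolic Hölder or Schauder theory produces a constant depending only on $L^{\infty}$-norms of the coefficients and source terms together with ellipticity and domain geometry, all of which are time-independent by virtue of Theorem~\ref{theorem3} and the lower bound $v\geq\eta_0$. Since the estimates are applied on cylinders of fixed unit length in time, they can be concatenated along the time axis without any accumulation of constants, producing the global-in-time bounds claimed in \dref{gbhngdddfffdfgggggeqx45xx1211} and \dref{gbhngddfgggggeqx45xx1211}.
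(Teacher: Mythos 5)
Your proposal follows essentially the same route as the paper: starting from the uniform $L^{\infty}$-bounds on $u$, $\nabla v$ and the positive lower bound on $v$, both arguments write the first equation in divergence form with bounded coefficients, invoke the Porzio--Vespri Hölder estimate on shifted unit cylinders to get $u\in C^{\alpha,\alpha/2}$ uniformly, feed this into the heat equation for $v$ to upgrade its regularity, and finally close the loop with a further elliptic/parabolic regularity pass (the paper cites Theorem~IV.5.3 of Ladyzhenskaya--Solonnikov--Ural'tseva where you instead do a second Schauder iteration on the first equation, but these are interchangeable standard devices). The attention you pay to time-uniformity of the constants via fixed-length cylinders is exactly the point that makes the argument work, and it matches the paper's intent.
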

\begin{proof}
Firstly,  based on the regularity of $u$ and $v$, one can readily get a constant $C_1> 0$  such that
\begin{equation}
\| u(\cdot,t)\|_{L^{\infty}(\Omega)}+\| v(\cdot,t)\|_{W^{1,\infty}(\Omega)}\leq C_1~~\mbox{for all}~~ t>0.
\label{ddxcvbbggdddfghhdfgcz2vv.5ghju48cfg924ghddfgggyusdffji}
\end{equation}
Next,  we can rewrite the first equation of \dref{111.ssderrfff1} as
\begin{equation}u_t=\nabla a(x,t,u,\nabla u)+b(x,t,u)
\label{ddxcvbbggdddfghhdfgcz2vv.5ghju48cfg924ghddfgggyddfggusdffji}
\end{equation}
with boundary data $a(x,t,u,\nabla u)\cdot\nu = 0$ on $\partial\Omega\times(0,\infty)$
, where $a(x,t,u,\nabla u) := \nabla u-\frac{u}{ v}\nabla v$, $b(x,t,u) = au-\mu u^2$, $(x, t)\in
 \Omega\times(0,\infty).$
 Therefore, in view of \dref{eqddddxhddfffgfggdfghhhhj45xx12112} and \dref{ddxcvbbggdddfghhdfgcz2vv.5ghju48cfg924ghddfgggyusdffji}, applying Lemma 1.3 of \cite{Porzio710} to \dref{ddxcvbbggdddfghhdfgcz2vv.5ghju48cfg924ghddfgggyddfggusdffji}, we drive that
 \begin{equation}u \in C^{\alpha,\frac{\alpha}{2}}(\bar{\Omega}\times[1,+\infty)),
 \label{ddxcvbbggdddfghhdfgcz2vv.5ghju48cfg9ddffff24ghddfgggyddfggusdffji}
\end{equation}
so that, by  the second equation of \dref{111.ssderrfff1}, we can get that $v\in C^{\alpha,1+\frac{\alpha}{2}}(\bar{\Omega}\times[1,+\infty))$.
Finally, with the aforementioned
regularity properties of $u$ and $v$ at hand, we can obtain form Theorem IV.5.3 of \cite{Ladyzenskaja710}
that
\dref{gbhngddfgggggeqx45xx1211}  holds.
\end{proof}

\begin{lemma}\label{fhhghfbgddddlemma4563025xxhjklojjkkkgyhuissddff}
Assume the hypothesis of Theorem \ref{theoremssdddffggdd3}
holds.
 Then if $(u, v)$ is a nonnegative
global classical solution of \dref{111.ssderrfff1}, we have
\begin{equation}\label{fvgbccvvhnjmkfgbeerrrdffrhnn6291}
\lim_{t\rightarrow+\infty}\|U(\cdot,t)-1\|_{L^\infty(\Omega)}=0
\end{equation}
as well as
\begin{equation}\label{fvgbccvvhnjmkfgbeddffferrrdffrhnddfn6291}
\lim_{t\rightarrow+\infty}\|V(\cdot,t)\|_{L^\infty(\Omega)}=0.
\end{equation}
\end{lemma}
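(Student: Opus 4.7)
The plan is to combine the Lyapunov structure from Lemma \ref{lemmaddffffdfffgg4dddd5630} with the uniform H\"{o}lder regularity from Lemma \ref{fghfbglemma4563025xxhjklojjkkkgyhuissddff} to first obtain $L^2$ decay of $U-1$ and $V$, and then to upgrade it to $L^\infty$ decay via a Gagliardo--Nirenberg type interpolation against the uniform $W^{1,\infty}$ bound.

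First I would observe that the functional
\[
F(t)=\int_{\Omega}(U-1-\ln U)+\frac{L}{2}\int_{\Omega}V^2
\]
is nonnegative, because the elementary inequality $s-1-\ln s\geq 0$ for $s>0$ together with $V^2\geq 0$ gives $F\geq 0$. Since Lemma \ref{lemmaddffffdfffgg4dddd5630} shows $F'(t)\leq -G(t)$ with $G(t)=G_0\bigl(\int_{\Omega}(U-1)^2+\tfrac{L}{2}\int_{\Omega}V^2\bigr)$, integrating on $(0,\infty)$ and using $F\geq 0$ yields
\[
\int_{0}^{\infty}\!\!\int_{\Omega}(U-1)^2\,dx\,dt+\int_{0}^{\infty}\!\!\int_{\Omega}V^2\,dx\,dt\leq \frac{F(0)}{G_0}<\infty.
\]
Thus the map $t\mapsto \varphi(t):=\int_{\Omega}(U-1)^2+\int_{\Omega}V^2$ is integrable on $(0,\infty)$.

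Next I would promote this integrability to actual decay. By Lemma \ref{fghfbglemma4563025xxhjklojjkkkgyhuissddff}, the functions $U$ and $V$ lie in $C^{\alpha,\alpha/2}(\bar\Omega\times[1,\infty))$ with a uniform bound, so $\varphi(t)$ is uniformly continuous on $[1,\infty)$. An integrable, uniformly continuous nonnegative function on $[1,\infty)$ must tend to $0$, whence
\[
\|U(\cdot,t)-1\|_{L^{2}(\Omega)}\to 0 \quad\text{and}\quad \|V(\cdot,t)\|_{L^{2}(\Omega)}\to 0 \quad\text{as } t\to\infty.
\]

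Finally, I would use the uniform bound $\|U(\cdot,t)\|_{W^{1,\infty}(\Omega)}+\|V(\cdot,t)\|_{W^{1,\infty}(\Omega)}\leq C$ supplied by \eqref{gbhngddfgggggeqx45xx1211}, together with the Gagliardo--Nirenberg inequality of Lemma \ref{lemma4sddd1ffgg}: for any $w\in W^{1,\infty}(\Omega)$,
\[
\|w\|_{L^{\infty}(\Omega)}\leq C_1\|w\|_{W^{1,\infty}(\Omega)}^{\theta}\|w\|_{L^{2}(\Omega)}^{1-\theta}+C_2\|w\|_{L^{2}(\Omega)}
\]
with $\theta=\frac{N}{N+2}\in(0,1)$. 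Applying this to $w=U-1$ and to $w=V$ and using the uniform $W^{1,\infty}$ bound together with the $L^2$ convergence established above immediately yields \eqref{fvgbccvvhnjmkfgbeerrrdffrhnn6291} and \eqref{fvgbccvvhnjmkfgbeddffferrrdffrhnddfn6291}. The only delicate point in the argument is verifying that the $L^2$ integrand $\varphi$ is uniformly continuous in time; this is precisely what the parabolic H\"{o}lder estimates of Lemma \ref{fghfbglemma4563025xxhjklojjkkkgyhuissddff} provide, and without that regularity the passage from $\int_0^\infty\varphi(t)\,dt<\infty$ to $\varphi(t)\to 0$ would fail.
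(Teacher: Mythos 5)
Your argument follows the same route as the paper, which simply cites Tao and Winkler (Lemma 3.7 of \cite{Tao79477ddffvg}): integrate the Lyapunov inequality of Lemma \ref{lemmaddffffdfffgg4dddd5630} to obtain space--time integrability of $(U-1)^2+V^2$, use the uniform H\"older continuity from Lemma \ref{fghfbglemma4563025xxhjklojjkkkgyhuissddff} to upgrade integrability to decay of the $L^2$ norms, and then interpolate with the uniform $W^{1,\infty}$ bound via Gagliardo--Nirenberg to get $L^\infty$ decay. The interpolation exponent $\theta=\tfrac{N}{N+2}$ and the Barbalat-type step are exactly what the paper itself uses when it proves the exponential rate in Theorem \ref{theoremssdddffggdd3}, so the logic is sound.

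One small technicality you should patch: you integrate the differential inequality from $t=0$ and bound everything by $F(0)/G_0$, but $F(0)=\int_\Omega(U_0-1-\ln U_0)+\tfrac{L}{2}\int_\Omega V_0^2$ need not be finite since $u_0\geq 0$ is only continuous and may vanish somewhere, making $\ln U_0$ non-integrable. The fix is the one the paper uses implicitly: pick any $\tau_0>0$, note that the strong maximum principle gives $U(\cdot,\tau_0)\geq c>0$ on $\bar\Omega$ so that $F(\tau_0)<\infty$, and integrate from $\tau_0$ instead, obtaining $\int_{\tau_0}^\infty\varphi(t)\,dt\leq F(\tau_0)/G_0<\infty$. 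The rest of your argument is unaffected.
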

\begin{proof}
Starting from the functional inequality \dref{cz2.51141ssdddgggderfttddfggt14} and Lemma \ref{fghfbglemma4563025xxhjklojjkkkgyhuissddff},
Lemma \ref{fhhghfbgddddlemma4563025xxhjklojjkkkgyhuissddff}
can be proved in the same way as in Ref. \cite{Tao79477ddffvg}. Therefore, we omit it here.
\end{proof}
With the above preparation, we can now integrate the energy inequality (see Lemma \ref{lemmaddffffdfffgg4dddd5630}) and make use of the Gagliardo-Nirenberg inequality as well as Lemma \ref{fhhghfbgddddlemma4563025xxhjklojjkkkgyhuissddff} to achieve that the solution $(u, v)$ exponentially
converges to the constant stationary solution $(\frac{a}{\mu },
\frac{a}{\mu })$ in the norm of $L^\infty(\Omega)$ as
$t\rightarrow\infty$.


{\bf The proof of Theorem \ref{theoremssdddffggdd3}}~
\begin{proof}
Denote $h(s):= s-1-\ln s.$ Noticing that
$h'(s) = 1-\frac{1}{s}$ and $h''(s) = 1+s^{-2} > 0$ for all $s > 0$, we obtain that $h(s) \geq h(1) = 0$ and $F(t)$ is nonnegative.
From Lemma \ref{lemmaddffffdfffgg4dddd5630}, we have 
\begin{equation}
\begin{array}{rl}
\disp\disp\int_{\tau_0+1}^tG(s)\leq&\disp{ F(\tau_0+1)-F(t)}\\
\leq&\disp{ F(\tau_0+1)~~\mbox{for all}~~t>\tau_0+1,}\\
\end{array}
\label{cz2.51141sdfghhsddegghhhhhjjjjssddrfddffgggttt14}
\end{equation}
using the definition of $G$ and $F$ again, we have
\begin{equation}
\int_{\tau_0+1}^{t}\left\{\int_{\Omega}(U-1)^2+\frac{L}{2}\int_{\Omega}V^2\right\}<+\infty.
\label{cz2.511ffff41ssddddddddgggderdffffttddfggt14}
\end{equation}
Observe that
$$\lim_{s\rightarrow1}\frac{s-1-\ln s}{(s-1)^2}=\frac{1}{2},$$
so that, for $\varepsilon=\frac{1}{6}$, there exists a positive constant $\delta (\delta<\frac{1}{4})$ such that
for any $|s-1|<\delta$,
$$-\frac{1}{6}<\frac{s-1-\ln s}{(s-1)^2}-\frac{1}{2}<\frac{1}{6},$$
thus,
$$\frac{1}{3}{(s-1)^2}<s-1-\ln s<\frac{2}{3}{(s-1)^2}~~~~\mbox{for any}~~|s-1|<\delta.$$
For the above $\delta>0,$ 
then there exists $t_0 > 0$ such that for all $t > t_0$,
\begin{equation}\label{fvgbdffffggccvvhnjmkfgssdddbedddfddffferrrdffrfffhnn6291}
\|U(\cdot,t)-1\|_{L^\infty(\Omega)} < \delta\end{equation}
by \dref{fvgbccvvhnjmkfgbeerrrdffrhnn6291}.
Therefore, \dref{fvgbdffffggccvvhnjmkfgssdddbedddfddffferrrdffrfffhnn6291} implies that
for all $x\in\Omega$ and $t> t_0,$
\begin{equation}\label{fvgbdffffggccvvhnjmkfgbedddfddffferrrdffrfffhnn6291}
\frac{1}{3}{(U(x,t)-1)^2}<U(x,t)-1-\ln U(x,t)<\frac{2}{3}{(U(x,t)-1)^2}\leq{(U(x,t)-1)^2},
\end{equation}
which in view of  the definition of $F$ and $G$
yields to
\begin{equation}
\begin{array}{rl}
\disp\frac{1}{3}\int_{\Omega}(U-1)^2+\frac{L}{2}\int_{\Omega}V^2\leq
F(t)\leq \frac{1}{G_0}G(t).
\end{array}
\label{cz2.511ffffddffff41ssdddgggderdffffttddfggt14}
\end{equation}
Hence
\begin{equation}
\begin{array}{rl}
\disp F'(t)\leq-
G(t)\leq G_0F(t),
\end{array}
\label{cz2.511ffffddffff41ssdddgggdeddffgrdffffttddfggt14}
\end{equation}
from which one has
\begin{equation}
\begin{array}{rl}
\disp F(t)\leq
F(t_0)e^{-G_0(t-t_0)},
\end{array}
\label{cz2.511ffffddhhhffff41ssdddgggdeddffgrdffffttddfggt14}
\end{equation}
Substituting \dref{cz2.511ffffddhhhffff41ssdddgggdeddffgrdffffttddfggt14} into \dref{cz2.511ffffddffff41ssdddgggderdffffttddfggt14}), we obtain
\begin{equation}
\begin{array}{rl}
\disp\frac{1}{3}\int_{\Omega}(U-1)^2+\frac{L}{2}\int_{\Omega}V^2\leq
F(t_0)e^{-G_0(t-t_0)},
\end{array}
\label{cz2.511ffffddffff41ssdddggddfffffgderdffffttddfggt14}
\end{equation}
which implies that there is  $C_1 > 0$ fulfilling
such that 
\begin{equation}\label{fvgbccvvhnjmkfgbdffrhnkkkn6291}
\|U(\cdot,t)-1\|_{L^2(\Omega)}\leq C_1e^{-\frac{G_0}{2} t}~~~\mbox{for all}~~t > t_0
\end{equation}
as well as
\begin{equation}\label{ffgfvgbccvvhnjmkfghhjjgllllbhnn6291}
\|v(\cdot,t)-\frac{a}{\mu}\|_{L^2(\Omega)}\leq C_1e^{-\frac{G_0}{2} t}~~~\mbox{for all}~~t > t_0.
\end{equation}
Furthermore,  we also derive that  there exist constants $C_2 > 0$ and $t_0>1$ such that
\begin{equation}
\begin{array}{rl}\label{gbhngddfgggggeqx45xx1ssddd211}
&\disp{\|U(\cdot,t)-1\|_{W^{1,\infty}(\Omega)}+\|V(\cdot,t)\|_{W^{1,\infty}(\Omega)}\leq C_2~~\mbox{for all}~~t>t_0}
\\
\end{array}
\end{equation}
by using \dref{gbhngddfgggggeqx45xx1211} and \dref{ddddffffffhddffgggghhhzaxscdfv1.1}.
We also recall from the Gagliardo-Nirenberg inequality that there exist positive constants $C_4$ and $C_5$ such that
\begin{equation}\label{fvgbccvvhnjmkfhhhhhgbdffrhnkkkn6291}
\begin{array}{rl}
\disp
\|U(\cdot,t)-1\|_{L^\infty(\Omega)}\leq&{ C_3(\|U(\cdot,t)-1\|_{W^{1,\infty}(\Omega)}^{\frac{N}{N+2}}\|U(\cdot,t)-1\|_{L^2(\Omega)}^{\frac{2}{N+2}}+\|U(\cdot,t)-1\|_{L^2(\Omega)})}\\
\leq&{ C_4\|U(\cdot,t)-1\|_{L^2(\Omega)}^{\frac{2}{N+2}}}\\
\leq&{C_5e^{-\frac{G_0}{N+2} t}~~\mbox{for all}~~t>t_0.}\\
\end{array}
\end{equation}
Similarly, we can obtain
\begin{equation}\label{fvgbccvvhnjffggmkfhhhhhgbdffrhnkkkn6291}
\begin{array}{rl}
\disp
\|V(\cdot,t)\|_{L^\infty(\Omega)}\leq&{ C_6e^{-\frac{G_0}{N+2} t}~~\mbox{for all}~~t>t_0.}\\
\end{array}
\end{equation}
\end{proof}



{\bf Acknowledgement}:
This work is partially supported by  Shandong Provincial
Science Foundation for Outstanding Youth (No. ZR2018JL005), the National Natural
Science Foundation of China (No. 11601215)  and Project funded by China
Postdoctoral Science Foundation (No. 2019M650927, 2019T120168).
%

\end{document}